\documentclass[]{amsart}
\usepackage{amsmath}
\usepackage[english]{babel}
\usepackage[utf8]{inputenc}
\usepackage{amsfonts}
\usepackage{amsthm}
\usepackage{color}
%opening
\numberwithin{equation}{section}

\newtheorem{thm}{Theorem}[section]

\newtheorem{prop}[thm]{Proposition}
\newtheorem{cor}[thm]{Corollary}
\newtheorem{lem}[thm]{Lemma}

\theoremstyle{remark}
\newtheorem{rmk}[thm]{Remark}
\theoremstyle{definition}

\DeclareMathOperator{\Cov}{Cov}
\DeclareMathOperator{\E}{\mathbb{E}}
\DeclareMathOperator{\N}{\mathbb{N}}
\DeclareMathOperator{\R}{\mathbb{R}}

\DeclareMathOperator{\cG}{\mathcal{G}}

\DeclareMathOperator{\cA}{\mathcal{A}}
\DeclareMathOperator{\cL}{\mathcal{L}}

\DeclareMathOperator{\D}{\mathbb{D}}

\DeclareMathOperator{\bP}{\mathbb{P}}

\DeclareMathOperator{\mm}{\mathbf{m}}

\newcommand{\pd}[2]{\frac{\partial #1}{\partial #2}}

\newcommand{\der}[2]{\frac{d #1}{d #2}}

\newcommand{\derfrac}[2]{\frac{d^{\nu} #1}{d #2^{\nu}}}

\newcommand{\Norm}[2]{\left\Vert #1 \right\Vert_{#2}}

\title{Fractional Immigration-Death Processes}
\author{Giacomo Ascione$^\ast$}
\address{$^\ast$ Dipartimento di Matematica e Applicazioni ``Renato Caccioppoli'', Università degli Studi di Napoli Federico II, 80126 Napoli, Italy}
\author{Nikolai Leonenko$^\dagger$}
\thanks{N. Leonenko was supported in particular by Australian Research Council's Discovery Projects funding scheme (project DP160101366), and  by project MTM2015-71839-P of MINECO, Spain (co-funded with FEDER funds).}
\address{$^\dagger$ School of Mathematics, Cardiff University, Cardiff CF24 4AG, UK}
\author{Enrica Pirozzi$^\ast$}
\email{giacomo.ascione@unina.it \\
	leonenkon@cardiff.ac.uk \\
	enrica.pirozzi@unina.it}

\begin{document}
\maketitle
\begin{abstract}
	In this paper we study explicit strong solutions for two difference-differential fractional equations, defined via the generator of an immigration-death process, by using spectral methods. Moreover, we give a stochastic representation of the solutions of such difference-differential equations by means of a stable time-changed immigration-death process and we use this stochastic representation to show boundedness and then uniqueness of these strong solutions. Finally, we study the limit distribution of the time-changed process.
\end{abstract}
\keywords{Stable subordinator, Caputo Fractional derivative, Time-changed process, Birth-death process}

\section{Introduction}
Birth-death processes constitute an important class of continuous time Markov chain (CTMC). They are widely used, for instance, in population and evolutionary dynamics (see \cite{Novozhilov2006,Nowak2006}), queueing theory (see \cite{Sharma2000}) and in epidemiology (see \cite{Allen2015}). A complete classification and characterization of birth-death processes is due to Karlin and McGregor, whose papers \cite{KarlinMcGregor1957a,KarlinMcGregor1957b} are the starting point of the study of family of classical orthogonal polynomials linked to such processes.\\
Classical orthogonal polynomials are widely used to study the solutions of Kolmogorov equations as in the case in which the state space of the process is continuous, as well as in the discrete one. In the continuous case, the families of classical orthogonal polynomials are used to give a spectral decomposition of Kolmogorov equations induced by the generators of Pearson diffusions \cite{Forman2008}. In the discrete case, the discrete analogue of Pearson diffusions is given by a certain class of solvable birth-death processes \cite{Kuznetsov2004}. Moreover one can associate to any family of classical orthogonal polynomials of discrete variable another particular family, called the dual family \cite{NikiforovUvarovSuslov1991}. In some cases, a family of classical orthogonal polynomials of discrete variable could be in duality with itself: in this case it is called \textit{self-dual} family \cite{Schoutens2012}. Among self-dual families, the simplest one is the family of Charlier polynomials, whose self-duality is induced by the following formula
\begin{equation*}
C_n(x,\alpha)=C_x(n,\alpha) , \ n,x \in \mathbb{N}_0,
\end{equation*}
called duality formula for Charlier polynomials (see Section \ref{sec3} for the definition of Charlier polynomials).\\
Charler polynomials are really useful in the study of immigration-death processes (or $M/M/\infty$ queues) \cite{Schoutens2012} and in their general version on $1$-dimensional lattice, called Charlier processes \cite{AlbaneseKuznetsov2005,Kuznetsov2004}. Indeed, one can give a spectral decomposition of the strong solutions of Kolmogorov equations induced by the generator of the immigration-death processes in terms of such polynomials.\\
For Pearson diffusions, the classical orthogonal polynomials are powerful tools to study strong solutions of fractional Kolmogorov equations and characterize a stochastic representation of such solutions via time-changed (through the inverse of a L\`{e}vy subordinator) Markov processes \cite{Gajda,Leonenko2013a,Leonenko2013b,Leonenko2017}. In the discrete case, fractional (time-changed) processes have been widely considered via different approaches. First of all, a fractional version of the Poisson process has been introduced using Mittag-Leffler distributed inter-jump times instead of exponential ones \cite{Aletti2018,Laskin2003, Leonenko2019, MainardiGorenfloScalas2007,MainardiGorenfloVivoli2005} (this approach has been also applied to general counting processes \cite{DiCrescenzo2015}). Such process can be also obtained using a fractional differential-difference equations approach \cite{BeghinOrsingher2009,BeghinOrsingher2010} and by means of a time-change \cite{MeerschaertNaneVellaisamy2011}.\\
With the same approach, some classes of fractional birth-death processes have been introduced and studied \cite{Orsingher2010a,Orsingher2010b,Orsingher2011}: in these papers, properties of these processes are deduced from a fractional version of their Kolmogorov forward equation.\\
Here, following the approach of \cite{Leonenko2013a}, we show the existence of strong solutions for the time-fractional counterpart of the Kolmogorov backward and forward equations of immigration-death processes with the aid of Charlier polynomials and link them to a time-changed immigration-death process.\\
In particular:
\begin{itemize}
	\item in Section \ref{sec2} we give some basics on birth-death processes;
	\item in Section \ref{sec3} we give some notions on the classical immigration-death process, defining its generator and its forward operator;
	\item in Section \ref{sec4} we show the existence of strong solutions of the time-fractional Kolmogorov backward and forward equations under suitable assumption on the initial data;
	\item in Section \ref{sec5} we introduce a fractional immigration-death process and show how the strong solutions of the time-fractional Kolmogorov backward and forward equations can be interpreted by using such process;
	\item in Section \ref{sec6}, we show the uniqueness of such strong solutions by using the aforementioned stochastic representation and a uniqueness criterion for uniformly bounded solutions \cite{AscioneLeonenkoPirozzi2018}, under suitable assumptions on the initial data;
	\item finally, in Section \ref{sec7} we give the limit distribution of the constructed fractional immigration-death process and we discuss its autocovariance function.
\end{itemize}
\section{Birth-death processes}\label{sec2}
Let us give some information about general birth-death processes, following the lines of \cite{KarlinMcGregor1957a,KarlinMcGregor1957b}. We say that a time-homogeneous continuous time Markov chain $N(t)$ defined on $\mathbb{N}_0=\{0,1,2,\dots\}$ is a birth and death process if and only if, denoting with 
\begin{equation*}
p(t,x;y)=\bP(N(t+s)=x|N(s)=y), \ x,y=0,1,2,\dots; \ t,s\ge 0,
\end{equation*}
the transition probability functions and $P(t)=(p(t,x;y))_{x,y\ge 0}$ the transition probability matrix, it is solution of the following two differential equations
\begin{align}
P'(t)=\cA P(t),\label{eq:backandfor} \qquad
P'(t)=P(t)\cA,
\end{align}
with initial condition $P(0)=I$ and the infinite matrix $\cA=(A(x,y))_{x,y\ge 0}$ is such that:
\begin{align*}
&A(x,x+1)=B(x) \quad x \ge 0, \qquad &A(x,x)=-(B(x)+D(x)) \quad x \ge 0,\\
&A(x,x-1)=D(x) \quad x \ge 1, \qquad
&A(x,y)=0 \quad |x-y|>1,
\end{align*}
where $B(x)>0$ for any $x \ge 0$, $D(x)>0$ for any $x \ge 1$ and $D(0)\ge 0$. Equations \eqref{eq:backandfor} are called respectively \textit{backward and forward Kolmogorov equation}. In order to obtain $P(t)$ we need to impose other two properties:
\begin{align*}
P_{i,j}(t)\ge 0, \qquad \sum_{j=0}^{+\infty}P_{i,j}(t)\le 1.
\end{align*}
In particular it is possible to show that $N(t)$ is a birth-death process if and only if its generator is given by:
\begin{align*}
\begin{split}
\cG f(x)&=(B(x)-D(x))\nabla^+f(x)+D(x)\Delta f(x)\\&=(B(x)-D(x))\nabla^-f(x)+B(x)\Delta f(x),
\end{split}
\end{align*}
for $x=0,1,2,\dots$ and $f(-1)=0$, where the difference-type operators $\nabla^\pm$ and $\Delta$ are defined as
\begin{align*}
\nabla^+ f(x)&=f(x+1)-f(x) \ \forall x \in \mathbb{N}_0\\
\nabla^- f(x)&=f(x)-f(x-1) \ \forall x \in \mathbb{N}_0\\
\Delta f(x)&=f(x+1)-2f(x)+f(x-1) \ \forall x \in \mathbb{N}_0.
\end{align*}
The following discrete versions of the Leibnitz rule will be useful
\begin{align}\label{eq:forLeibrule}
\nabla^+(fg)(x)&=f(x+1)\nabla^+ g(x)+g(x)\nabla^+f(x)\\
\label{eq:backLeibrule}
\nabla^-(fg)(x)&=f(x)\nabla^- g(x)+g(x-1)\nabla^-f(x)\\
\label{eq:LapLeibrule} \Delta(fg)(x)&=f(x+1)\nabla^+ g(x)-f(x-1)\nabla^-g(x)+g(x)\Delta f(x).
\end{align}
The backward Kolmogorov equation becomes, for fixed $x \in \mathbb{N}_0$
\begin{equation*}
\begin{cases}
p'(t,x;y)=\cG p(t,x;y)\\
p(0,x;y)=\delta_{x,y},
\end{cases}
\end{equation*}
where $\cG$ works on $y$ and 
\begin{equation*}
\delta_{x,y}=\begin{cases} 1 & x=y \\
0 & \mbox{otherwise}.\end{cases}
\end{equation*}
is Kronecker symbol.\\
Moreover we can find a \textit{forward} operator
\begin{align*}
\begin{split}
\cL f(x)&=-\nabla^-((B(\cdot)-D(\cdot))f)(x)+\Delta(D(\cdot)f)(x)\\&=-\nabla^+((B(\cdot)-D(\cdot))f)(x)+\Delta(B(\cdot)f)(x),
\end{split}
\end{align*}
so that for fixed $y \in \mathbb{N}_0$ the forward Kolmogorov equation becomes
\begin{equation*}
\begin{cases}
p'(t,x;y)=\cL p(t,x;y)\\
p(0,x;y)=\delta_{x,y},
\end{cases}
\end{equation*}
where $\cL$ works on $x$.\\
We will focus on the case in which the generator is in the form:
\begin{equation*}
\cG=p_1(x)\nabla^++p_2(x)\Delta,
\end{equation*}
where $p_1(x)$ and $p_2(x)$ are polynomials such that $\deg p_1(x)\le 1$ and $\deg p_2(x) \le 2$. Then we can find the \textit{classical orthogonal polynomials of discrete variable} as solution of the equation
\begin{equation*}
\cG f(x)=-\lambda f(x),
\end{equation*}
for some $\lambda$, which is an hypergeometric type difference equation. The values that these polynomials assume on a lattice $\{D_1,D_1+1,\dots,D_2\}$ for some $D_1,D_2$ fully characterize the transition probability and the solutions of the backward and forward Kolmogorov equations. Moreover, these polynomials respect an orthogonality relation in $\ell^2(\mm)$ for some measure $\mm$ called the spectral measure, which is an atomic measure on the lattice. In this case, the spectral measure coincides with the invariant measure of the process $N(t)$ and its mass function $m(x)=\mm(\{x\})$ is solution of a discrete analogue of the Pearson equation
\begin{equation*}
\nabla^+(p_2(x)\mm(x))=p_1(x)\mm(x).
\end{equation*}
Following the lines of \cite{Kuznetsov2004}, for $p_1(x)=a-bx$, we can recognize the following three class of solvable birth-death processes:
\begin{itemize}
	\item For $p_2(x)=bx$ we have the Immigration-Death process;
	\item For $p_2(x)=\frac{1}{2}\sigma^2x$ where $\frac{1}{2}\sigma^2 \not = b$ we have a negative binomial process;
	\item For $p_2(x)=\frac{1}{2}\sigma^2x(A-x)$ we have a hypergeometric process.
\end{itemize}
However, we will focus only on the first case for the choice of the polynomials $p_1$ and $p_2$.
\section{Immigration-death processes}\label{sec3}
Fix $a,b>0$ the operator
\begin{equation*}
\cG=(a-bx)\nabla^-+a\Delta;
\end{equation*}
which is a discrete version of the Ornstein-Uhlenbeck generator on $\mathbb{N}_0$.\\
A continuous time Markov chain $N(t)$ defined on $\mathbb{N}_0$ that admits $\cG$ as generator will be called \textit{immigration-death process} (or also $M/M/\infty$ queue: see, for instance, \cite{Schoutens2012}).
This process can be generalized to a particular birth-death process with values on a $1$-dimensional lattice called \textit{Charlier process} (see \cite{AlbaneseKuznetsov2005}), but we will focus on the $\mathbb{N}_0$-valued one.
For such process, the backward Kolmogorov equations are in the form
\begin{equation*}
\der{u}{t}(t,x)=\cG u(t,x).
\end{equation*}
Moreover, from $\cG$ we can recognize the birth and death parameters as
\begin{align*}
B(x)=a, && D(x)=bx,
\end{align*}
and thus the forward operator as
\begin{equation*}
\cL f(x)=-\nabla^+((a-bz)f(z))(x)+a\Delta f(x),
\end{equation*}
where with $\nabla^+((a-bz)f(z))(x)$ we intend the operator $\nabla^+$ applied to the function $z \mapsto (a-bz)f(z)$ and then evaluated in $x$.\\
The operators $\cG$ and $\cL$ can be represented as infinite matrices. In particular we have $\cG=(G(x,y))_{x,y\ge 0}$ where, for $x>0$
\begin{equation*}
\begin{gathered}
G(x,x-1)=bx \qquad G(x,x)=-(a+bx) \qquad G(x,x+1)=a\\
G(0,0)=-a \qquad G(0,1)=a
\end{gathered}
\end{equation*}
and $\cL=(L(x,y))_{x,y \ge 0}$ where, for $x>0$
\begin{equation*}
\begin{gathered}
L(x,x-1)=a \qquad L(x,x)=-(a+bx) \qquad L(x,x+1)=b(x+1)\\
L(0,0)=-a \qquad L(0,1)=b.
\end{gathered}
\end{equation*}
The stationary measure of the process $N(t)$ is the Poisson distribution of parameter $\alpha$, given by:
\begin{equation*}
\mm(\{x\})=e^{-\alpha}\frac{\alpha^x}{x!}, \ x=0,1,2,\dots.
\end{equation*}
Now let us introduce the main Banach sequence spaces we will use through this paper:
\begin{itemize}
	\item Let us denote with $\ell^\infty$ the Banach space of bounded functions $f:\mathbb{N}_0 \to \R$ equipped with the norm
	\begin{equation*}
	\Norm{f}{\ell^\infty}=\max_{x \in \mathbb{N}_0}|f(x)|;
	\end{equation*}
	\item Let us denote with $c_0$ the subspace of $\ell^\infty$ of bounded functions $f:\mathbb{N}_0 \to \R$ such that $\lim_{x}f(x)=0$;
	\item Let us denote with $\ell^1$ the Banach space of the functions $f:\mathbb{N}_0 \to \R$ such that
	\begin{equation*}
	\Norm{f}{\ell^1}=\sum_{x=0}^{+\infty}|f(x)|<+\infty.
	\end{equation*}
	\item Let us denote with $\ell^2$ the Hilbert space of functions $f:\mathbb{N}_0 \to \R$ such that
	\begin{equation*}
	\Norm{f}{\ell^2}^2:=\sum_{x=0}^{+\infty}f^2(x)<+\infty
	\end{equation*}
	equipped with the scalar product
	\begin{equation*}
	\langle f,g\rangle_{\ell^2}=\sum_{x=0}^{+\infty}f(x)g(x)
	\end{equation*}
	\item Let us denote with $\ell^2(\mm)$ the Hilbert space of functions $f:\mathbb{N}_0 \to \R$ such that
	\begin{equation*}
	\Norm{f}{\ell^2(\mm)}^2:=\sum_{x=0}^{+\infty}\mm(\{x\})f^2(x)<+\infty
	\end{equation*}
	equipped with the scalar product
	\begin{equation*}
	\langle f,g\rangle_{\ell^2(\mm)}=\sum_{x=0}^{+\infty}\mm(\{x\})f(x)g(x).
	\end{equation*}
\end{itemize}
\begin{rmk}\label{rmk:emb}
	Let us observe that $\ell^2$ is continuously included in $\ell^2(\mm)$. Consider a function $f \in \ell^2$. Then
	\begin{equation*}
	\sum_{x=0}^{+\infty}m(x)f^2(x)=e^{-\alpha}\sum_{x=0}^{+\infty}\frac{\alpha^x}{x!}f^2(x).
	\end{equation*}
	Now, let us observe that the sequence $x\mapsto\frac{\alpha^x}{x!}$ converges to $0$ as $x \to +\infty$, hence there exists a constant $C(\alpha)$ such that $\frac{\alpha^x}{x!}\le C(\alpha)$. Thus
	\begin{equation*}
	e^{-\alpha}\sum_{x=0}^{+\infty}\frac{\alpha^x}{x!}f^2(x)\le e^{-\alpha}C(\alpha)\Norm{f}{\ell^2}^2.
	\end{equation*}
	Moreover, since $\ell^1$ is continuously included in $\ell^2$ (see \cite{Villani1985}), we have that $\ell^1$ is also continuously included in $\ell^2(\mm)$.
\end{rmk}
From the matrix representation of the generator $\cG$ and the forward operator $\cL$ one can prove the following Lemma.
\begin{lem}\label{lem:contlemma}
	The operators $\cG:\ell^2(\mm)\mapsto \ell^2(\mm)$ and $\cL:\ell^2(\mm)\mapsto \ell^2(\mm)$ are continuous.
\end{lem}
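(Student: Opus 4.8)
The plan is to use the tridiagonal (banded) matrix representations of $\cG$ and $\cL$ and to estimate, for a generic $f\in\ell^2(\mm)$, the $\ell^2(\mm)$-norm of the image band by band, keeping track of the explicit Poisson weights $m(x):=\mm(\{x\})=e^{-\alpha}\alpha^{x}/x!$ (note that the discrete Pearson equation of Section \ref{sec2} forces $\alpha=a/b$).

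First I would record the pointwise action, valid for $x\ge 1$,
\begin{gather*}
\cG f(x)=af(x+1)-(a+bx)f(x)+bxf(x-1),\\
\cL f(x)=af(x-1)-(a+bx)f(x)+b(x+1)f(x+1),
\end{gather*}
with the obvious truncations at $x=0$. Applying $(p+q+r)^{2}\le 3(p^{2}+q^{2}+r^{2})$ and summing against $m$ yields
\begin{equation*}
\Norm{\cG f}{\ell^2(\mm)}^{2}\le 3\sum_{x\ge 0}m(x)\bigl(a^{2}f^{2}(x+1)+(a+bx)^{2}f^{2}(x)+b^{2}x^{2}f^{2}(x-1)\bigr),
\end{equation*}
and the analogous bound for $\cL$ (with $b^{2}(x+1)^{2}$ on the shifted term). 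In the first and third sums I would shift the summation index so that $f$ is always evaluated at $x$, using $m(x-1)=(x/\alpha)m(x)$ and $m(x+1)=(\alpha/(x+1))m(x)$, which collapses the right-hand side to a single series $\sum_{x\ge 0}C(x)m(x)f^{2}(x)$ with $C$ an explicit function built from $a$, $b$, $\alpha$ and the birth/death rates; the same is done for $\cL$. Continuity then reduces to the bound $\sup_{x\ge 0}|C(x)|<\infty$, which would give $\Norm{\cG f}{\ell^2(\mm)}\le(\sup_{x}|C(x)|)^{1/2}\Norm{f}{\ell^2(\mm)}$, and likewise for $\cL$.

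The main obstacle — indeed the real content of the lemma — is this uniform bound on $C$: after the index shifts the polynomial rate factors $a-bx$, $bx$ (and $b(x+1)$ for $\cL$) survive, so a crude term-by-term estimate does not suffice and one must exploit the structure of the operators, namely the telescoping built into $\nabla^{\pm}$ and $\Delta$ together with the Pearson identity $\nabla^{+}(p_{2}(x)m(x))=p_{1}(x)m(x)$, here with $p_{1}(x)=a-bx$ and $p_{2}(x)=bx$, to see that these factors are in fact controlled. The forward operator $\cL$ is the more delicate of the two, since its super-diagonal entry $b(x+1)$ carries an extra power of $x$ compared with $\cG$; but the same Pearson cancellation should absorb it, and once $C$ is bounded the asserted continuity of $\cG$ and $\cL$ on $\ell^2(\mm)$ follows at once.
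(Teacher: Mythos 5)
Your reduction to the bound $\sup_{x}|C(x)|<\infty$ is set up correctly, but that bound is false, and the gap cannot be closed, because the statement itself does not hold. After the index shifts $m(x-1)=(x/\alpha)m(x)$ and $m(x+1)=(\alpha/(x+1))m(x)$, your coefficient for $\cG$ comes out as $C(x)=3\bigl(a^{2}x/\alpha+(a+bx)^{2}+\alpha b^{2}(x+1)\bigr)\sim 3b^{2}x^{2}$, and for $\cL$ the shifted sub-diagonal term contributes $b^{2}x^{3}/\alpha$. You hope that the telescoping/Pearson structure restores a cancellation that the crude inequality $(p+q+r)^{2}\le 3(p^{2}+q^{2}+r^{2})$ discarded; it does not. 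Test $\cG$ on $f=\delta_{x_0}$: the image is supported on the three \emph{distinct} points $x_0-1$, $x_0$, $x_0+1$ with values $a$, $-(a+bx_0)$, $b(x_0+1)$, so no cancellation between the bands can occur, and one computes
\begin{equation*}
\frac{\Norm{\cG\delta_{x_0}}{\ell^2(\mm)}^{2}}{\Norm{\delta_{x_0}}{\ell^2(\mm)}^{2}}=\frac{a^{2}x_0}{\alpha}+(a+bx_0)^{2}+\alpha b^{2}(x_0+1)\longrightarrow+\infty .
\end{equation*}
Equivalently, the normalized Charlier polynomials $Q_n$ are unit vectors of $\ell^2(\mm)$ with $\cG Q_n=-bnQ_n$, so $\Norm{\cG Q_n}{\ell^2(\mm)}=bn$ is unbounded and the operator norm of $\cG$ is infinite; the same test shows $\cL$ is unbounded as well (there the sub-diagonal even gives a cubic rate).

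So the obstacle you flagged as ``the real content of the lemma'' is in fact fatal: no estimate along your lines, or any other, can establish boundedness. For what it is worth, the paper's own one-line proof (a bare appeal to Schur's test) founders on the same point: the row sums $\sum_{y}|G(x,y)|=2(a+bx)$ grow linearly, and no choice of Schur weights can compensate, since boundedness would contradict the unboundedness of the spectrum $\{-bn\}_{n\ge 0}$. Your computation, pushed to its honest conclusion, is a disproof rather than a proof; to salvage the later uniqueness arguments one would need a weaker substitute (e.g.\ that $\cG$ and $\cL$ are densely defined and closed on $\ell^2(\mm)$, or bounded on finite linear spans of the $Q_n$), not the continuity claimed here.
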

\begin{proof}
	The proof is a straightforward consequence of Schur's test (see \cite{HalmosSunder2012})
\end{proof}
Moreover, another interesting property that follows from the matrix representation of $\cG$ is given by the following Lemma.
\begin{lem}\label{lem:Fellemma}
	The process $N(t)$ is a Feller process.
\end{lem}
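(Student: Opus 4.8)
The plan is to show that the transition semigroup $P_tf(x):=\sum_{y\ge 0}p(t,x;y)f(y)$ restricts to a Feller semigroup on $c_0$, i.e. a strongly continuous, positive, sub-Markovian semigroup on $c_0$. On the discrete state space $\mathbb N_0$ there is no continuity requirement, and positivity together with the contraction bound $\Norm{P_tf}{\ell^\infty}\le\Norm{f}{\ell^\infty}$ are immediate from $p(t,x;y)\ge 0$ and $\sum_y p(t,x;y)\le 1$; the function $P_tf$ is also well defined on all of $\ell^\infty$ for the same reason. Hence the content reduces to: (a) $N(t)$ is non-explosive, so that $P_t$ is an honest semigroup of transition operators; (b) $P_t(c_0)\subseteq c_0$; (c) $\Norm{P_tf-f}{\ell^\infty}\to 0$ as $t\to 0^+$ for every $f\in c_0$. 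Point (a) is where the matrix form of $\cG$ is used: since the birth rates $B(x)\equiv a$ do not depend on the state, the immigration events form a Poisson process of rate $a$, so the number $U(t)$ of upward jumps on $[0,t]$ is a.s.\ finite; as deaths only decrease the state one has $\sup_{s\le t}N(s)\le N(0)+U(t)<\infty$, and the minimal process cannot explode (this is also Reuter's non-explosion criterion for birth–death chains, which holds here already because $\sum_n 1/B(n)=+\infty$).

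For (b) and (c) I would exploit the explicit structure of the $M/M/\infty$ queue. Under $\bP_x$ the $x$ initial individuals die independently at rate $b$, while the immigration stream evolves independently of them, so $N(t)$ has the law of $\xi_x(t)+Z(t)$, where $\xi_x(t)\sim\mathrm{Bin}\big(x,e^{-bt}\big)$ and $Z(t)$ is an independent copy of $N(t)$ started from $0$. In particular $N(t)\ge\xi_x(t)$ under $\bP_x$, so for every fixed $t>0$ and $M\in\mathbb N_0$
\begin{equation*}
\bP_x\big(N(t)\le M\big)\le\bP\big(\mathrm{Bin}(x,e^{-bt})\le M\big)\longrightarrow 0\qquad(x\to+\infty).
\end{equation*}
Given $f\in c_0$ and $\varepsilon>0$, choose $M$ with $\sup_{y>M}|f(y)|\le\varepsilon$; then $|P_tf(x)|\le\varepsilon+\Norm{f}{\ell^\infty}\,\bP_x(N(t)\le M)\le 2\varepsilon$ for $x$ large, which together with $\Norm{P_tf}{\ell^\infty}\le\Norm{f}{\ell^\infty}$ proves (b). For (c), by density of the finitely supported sequences in $c_0$ and $\Norm{P_t}{}\le1$ it suffices to treat $f=\mathbf 1_{\{j\}}$, for which $\Norm{P_tf-f}{\ell^\infty}=\max\big(1-p(t,j;j),\ \sup_{x\ne j}p(t,x;j)\big)$: the finitely many indices $x\le j$ are handled by the pointwise convergence $p(t,x;j)\to\delta_{x,j}$, while for $x>j$ the comparison above and an elementary small-$t$ bound of the type $p(t,x;j)\le\bP(\mathrm{Bin}(x,e^{-bt})\le j)\le(j+1)\,x^{\,j}\big(1-e^{-bt}\big)^{\,x-j}$ give $\sup_{x>j}p(t,x;j)\to 0$ as $t\to 0^+$. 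Hence $P_t$ is strongly continuous, and $(P_t)$ is a Feller semigroup.

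The main obstacle is exactly the uniformity in the starting point $x$ required in (b) and (c): the pointwise relations $p(t,x;y)\to\delta_{x,y}$ hold for every continuous time Markov chain, but the Feller property genuinely needs the quantitative fact that probability mass cannot accumulate near the bottom of the state space uniformly in $x$, i.e.\ $\sup_x\bP_x(N(t)\le M)\to 0$ — which holds here precisely because the death rate $D(x)=bx$ grows linearly. A self-contained alternative that avoids these estimates is to verify the hypotheses of the Hille–Yosida–Ray theorem for $\cG$ acting on $c_0$: $\cG$ is densely defined on the finitely supported sequences; the positive maximum principle is read off the tridiagonal sign pattern of $(G(x,y))$, since at a nonnegative maximum $x_0$ one has $\cG f(x_0)=a\nabla^+f(x_0)-bx_0\,\nabla^-f(x_0)\le 0$; and the density of $\mathrm{Range}(\lambda-\cG)$ follows by solving the three-term recurrence $(\lambda+a+bx)f(x)-af(x+1)-bxf(x-1)=g(x)$ for finitely supported $g$ and checking that the admissible decaying solution lies in $c_0$ — it is in this last step that non-explosion re-enters. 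One then identifies the resulting Feller semigroup with $(P_t)$ by uniqueness of the minimal $\cG$-process, which yields that $N(t)$ is a Feller process.
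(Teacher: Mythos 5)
Your proof is correct, and it takes a genuinely different (and far more self-contained) route than the paper, which disposes of this lemma in one line by citing a general criterion from Ethier--Kurtz. The engine of your argument is the explicit product structure of the $M/M/\infty$ queue: under $\bP_x$ the population dominates an independent $\mathrm{Bin}(x,e^{-bt})$ thinning of the $x$ initial individuals, which gives the uniform-in-$x$ escape estimate $\bP_x(N(t)\le M)\le \bP(\mathrm{Bin}(x,e^{-bt})\le M)\to 0$ and hence both $P_t(c_0)\subseteq c_0$ and, after reducing to indicators $\mathbf 1_{\{j\}}$, strong continuity (your tail bound does work: for small $t$ the supremum of $x^j(1-e^{-bt})^{x-j}$ over integers $x>j$ is attained at $x=j+1$ and is $O(1-e^{-bt})$). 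Your diagnosis of where the substance lies is exactly right --- the pointwise relations $p(t,x;y)\to\delta_{x,y}$ are free, and what must be paid for is the uniformity in the starting point, which here comes from the linearly growing death rate; the non-explosion step via the constant immigration rate is also needed and correctly handled. The Hille--Yosida--Ray alternative you sketch (positive maximum principle from the tridiagonal sign pattern plus range density) is closer in spirit to what the paper's citation packages, so your write-up in effect contains both the paper's route and an elementary replacement for it. Two cosmetic points: your convention $p(t,x;y)=\bP(N(t)=y\mid N(0)=x)$ reverses the paper's, where the first spatial argument is the target state; and since you establish non-explosion the semigroup is in fact conservative rather than merely sub-Markovian. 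Neither affects the argument.
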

\begin{proof}
	The proof is a straightforward consequence of \cite[Corollary $3.2$]{Eth2005}.
\end{proof}
%The proofs of Lemma \ref{lem:contlemma} and \ref{lem:Fellemma} are given in Appendix \ref{app:B} and \ref{app:C} respectively.\\
Let us also observe that the spectrum of $\cG$ is given by the sequence $\lambda_n=-bn$ (\cite{Kuznetsov2004}), while the eigenfunctions are defined as $x \mapsto C_n(x,\alpha)$ where $\alpha=\frac{a}{b}$ and $C_n$ are the Charlier polynomials (see \cite{NikiforovUvarovSuslov1991,Schoutens2012}), which are defined by the generating function
\begin{equation*}
\sum_{n=0}^{+\infty}C_n(x,\alpha)\frac{t^n}{n!}=e^{-t}\left(1+\frac{t}{\alpha}\right)^x, \quad \ t \in \R
\end{equation*}
or via the three terms recurrence relations:
\begin{equation*}
-xC_n(x,\alpha)=\alpha C_{n+1}(x,\alpha)-(n+\alpha)C_n(x,\alpha)+nC_{n-1}(x,\alpha), \ n \ge 0,
\end{equation*}
where $C_{0}(x,\alpha)\equiv 1$ and $C_{-1}(x,\alpha)\equiv 0$, or
\begin{equation*}
C_{n+1}(x,\alpha)=\frac{1}{\alpha}\left[xC_n(x-1,\alpha)-C_n(x,\alpha)\right].
\end{equation*}
The first few Charlier polynomials are
\begin{align*}
C_0(x,\alpha)=1, \qquad C_1(x,\alpha)=\frac{x}{\alpha}-1, \qquad
C_2(x,\alpha)=\frac{x(x-1)}{\alpha^2}-2\frac{x}{\alpha}+1, \ \dots
\end{align*}
The orthogonality relation between the polynomials $C_n$ is given by
\begin{equation*}
\sum_{x=0}^{+\infty}C_n(x,\alpha)C_m(x,\alpha)\mm(\{x\})=\frac{n!}{\alpha^n}\delta_{n,m},
\end{equation*}
where $\delta_{n,m}$ is the Kronecker delta symbol. Thus, posing $d_n^2=\frac{n!}{\alpha^n}$, we have that
\begin{equation*}
\Norm{C_n(\cdot,\alpha)}{\ell^2(\mm)}=d_n.
\end{equation*}
Let us then define an orthonormal system of polynomials given by
\begin{equation}\label{eq:Q}
Q_n(x)=\frac{C_n(x,\alpha)}{d_n}.
\end{equation}
Let us also recall that we can exploit the decomposition of a function $g \in \ell^2(\mm)$ by means of the orthonormal basis $\{Q_n\}_{n \in \mathbb{N}_0}$. Indeed for any $g \in \ell^2(\mm)$, given the decomposition $g(x)=\sum_{n=0}^{+\infty}g_nQ_n(x)$ where $g_n=\langle g, Q_n \rangle_{\ell^2(\mm)}$, the sequence $\{g_n\}_{n \in \mathbb{N}_0}\in \ell^2$.\\
By using such orthonormal system of polynomials, it is well known (see \cite{KarlinMcGregor1957a,KarlinMcGregor1957b} but also \cite{Kuznetsov2004} for a review) that the transition probability function of the immigration-death process is given by
\begin{equation*}
p(t,x_1;x_0)=m(x_1)\sum_{n=0}^{\infty}e^{-bnt}Q_n(x_0)Q_n(x_1),
\end{equation*}
where $m(x)=\mm(\{x\})$ and is the fundamental solution of the backward Kolmogorov equation, that is to say that the Cauchy problems
\begin{equation*}
\begin{cases}
\der{u}{t}(t,x)=\cG u(t,x) \\
u(0,x)=g(x),
\end{cases}
\end{equation*}
and
\begin{equation*}
\begin{cases}
\der{v}{t}(t,x)=\cL v(t,x) \\
v(0,x)=f(x),
\end{cases}
\end{equation*}
with $g,f/m \in \ell^2(\mm)$ admit strong solutions $v$ given by
\begin{equation}\label{eq:stsolnofrac}
u(t,x)=\sum_{y=0}^{+\infty}p(t,y;x)g(y)=\sum_{n=0}^{+\infty}g_ne^{-bnt}Q_n(x),
\end{equation}
and
\begin{equation*}
v(t,x)=\sum_{y=0}^{+\infty}p(t,x;y)f(y)=m(x)\sum_{n=0}^{+\infty}f_ne^{-bnt}Q_n(x),
\end{equation*}
where $g(x)=\sum_{n=0}^{+\infty}g_nQ_n(x)$ and $f(x)/m(x)=\sum_{n=0}^{+\infty}f_nQ_n(x)$ and the convergence is uniform. In particular from \eqref{eq:stsolnofrac} one easily obtains that
\begin{equation}\label{eq:condmean}
\E[N(t)|N(0)=x]=xe^{-bt}+\alpha(1-e^{-bt}).
\end{equation}
\section{Strong solutions in the fractional case}\label{sec4}
Let us introduce the fractional derivative operator (see \cite{LiQianChen2011}). Fix $\nu \in (0,1)$ and consider the Caputo fractional derivative given by
\begin{equation}\label{eq:caputoder}
\frac{\partial^\nu u}{\partial t^\nu}(t,x)=\frac{1}{\Gamma(1-\nu)}\left[\pd{}{t}\int_0^t(t-\tau)^{-\nu}u(\tau,x)d\tau-\frac{u(0,x)}{t^\alpha}\right],
\end{equation}
that, if $u$ is differentiable in $t$, can be written also as
\begin{equation*}
\frac{\partial^\nu u}{\partial t^\nu}(t,x)=\frac{1}{\Gamma(1-\nu)}\int_0^t(t-\tau)^{-\nu}\pd{u}{t}(\tau,x)d\tau,
\end{equation*}
and pose for $\nu=1$ $\frac{\partial^\nu u}{\partial t^\nu}=\frac{\partial u}{\partial t}$. Note that the classes of functions for which the Caputo fractional derivative is well defined are discussed in \cite[Section $2.2$ and $2.3$]{MeerschaertSikorskii2011} (in particular one can use the class of absolutely continuous functions).\\
Denote with 
\begin{equation*}
\widetilde{u}(s,x)=\int_0^{+\infty}e^{-st}u(t,x)dt, \ s>0
\end{equation*}
the one-sided Laplace transform of $u$ with respect to $t$. Thus we have that the Laplace transform of $\frac{\partial^\nu u}{\partial t^\nu}$ is given by 
$$s^\nu \widetilde{u}(s,x)-s^{\nu-1}\widetilde{u}(0,x).$$\\
We want to find strong solutions for fractional Cauchy problems in the form:
\begin{equation}\label{eq:prob}
\begin{cases}
\frac{\partial^\nu u}{\partial t^\nu}(t,x)=\cG u(t,x);\\
u(0,x)=g(x),
\end{cases}
\end{equation}
for $g \in \ell^2(\mm)$ with the decomposition $g(x)=\sum_{n=0}^{+\infty}g_nQ_n(x)$. The main idea is to find a solution via separation of variables. Indeed, we can suppose that $u(t,x)=T(t)\varphi(x)$ and then observing that, if $u$ is solution of the first equation of \ref{eq:prob}, then
\begin{equation*}
\varphi(x)\derfrac{T}{t}(t)=T(t)\cG \varphi(x),
\end{equation*}
that leads, if $\varphi$ and $T$ do not vanish, to the two coupled equations:
\begin{equation*}
\begin{cases}
\cG \varphi(x)=-\lambda \varphi(x),\\
\derfrac{T}{t}(t)=-\lambda T(t),
\end{cases}
\end{equation*}
which are two eigenvalue problems. In particular we have observed that the first one admits a non zero solution if and only if $\lambda=-bn$ for some $n \in \mathbb{N}_0$ and in that case we can consider $\varphi(x)=Q_n(x)$. Moreover, the second problem admits a solution in the form
\begin{equation*}
T(t)=E_\nu(-\lambda t^\nu),
\end{equation*}
where $E_\nu$ is the Mittag-Leffler function defined as
\begin{equation}\label{eq:ML}
E_\nu(z)=\sum_{j=0}^{+\infty}\frac{z^j}{\Gamma(1+\nu j)}, \ z \in \mathbb{C}
\end{equation} 
(see, for instance, \cite{KilbasSrivastavaTrujillo2006}). Thus the idea is to find a solution in the form
\begin{equation*}
u(t,x)=\sum_{n=0}^{+\infty}u_nE_\nu(-bnt^\nu)Q_n(x).
\end{equation*}
Moreover, the initial condition suggests that
\begin{equation*}
\sum_{n=0}^{+\infty}u_nQ_n(x)=\sum_{n=0}^{+\infty}g_nQ_n(x),
\end{equation*}
so we have $u_n=g_n$ and then we expect the solution to be
\begin{equation}\label{eq:specdecheur}
u(t,x)=\sum_{n=0}^{+\infty}g_nE_\nu(-bnt^\nu)Q_n(x).
\end{equation}
These heuristic arguments have shown us how should the solution look like, hence we have to prove that such function $u$ is the solution we are searching for.\\
With the following Lemma, we will first exhibit the fundamental solution of the fractional Cauchy problem in Eq. \eqref{eq:prob}. 
\begin{lem}\label{lem:fundsol}
	Consider the series
	\begin{equation}\label{eq:fundsol}
	p_\nu(x,t;y)=m(x)\sum_{n=0}^{+\infty}E_\nu(-bnt^\nu)Q_n(x)Q_n(y),
	\end{equation}
	where $Q_n$ and $E_\nu$ are the functions defined in Equations \eqref{eq:Q} and \eqref{eq:ML} and $m(x)=\mm(\{x\})$. Then such series converges for fixed $t>0$ and $x,y \in \mathbb{N}_0$.
\end{lem}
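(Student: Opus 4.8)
The plan is to reduce the convergence of the series \eqref{eq:fundsol} to Parseval's identity in $\ell^2(\mm)$ combined with the elementary bound $0<E_\nu(-s)\le 1$ for $s\ge 0$. The key observation is that, for each fixed $y\in\mathbb{N}_0$, the indicator sequence $\mathbf{1}_{\{y\}}$ belongs to $\ell^2(\mm)$ with $\Norm{\mathbf{1}_{\{y\}}}{\ell^2(\mm)}^2=m(y)$, and its $n$-th Fourier coefficient with respect to the orthonormal basis $\{Q_n\}_{n\in\mathbb{N}_0}$ is
$$\langle \mathbf{1}_{\{y\}},Q_n\rangle_{\ell^2(\mm)}=\sum_{z=0}^{+\infty}\mm(\{z\})\mathbf{1}_{\{y\}}(z)Q_n(z)=m(y)Q_n(y).$$
Hence Parseval's identity yields $\sum_{n=0}^{+\infty}m(y)^2Q_n^2(y)=m(y)$, that is $\sum_{n=0}^{+\infty}Q_n^2(y)=m(y)^{-1}<+\infty$, so that for every fixed $y$ the sequence $\{Q_n(y)\}_{n\in\mathbb{N}_0}$ lies in $\ell^2$.

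Next I would recall that, since $\nu\in(0,1)$, the function $t\mapsto E_\nu(-t^\nu)$ is completely monotone (Pollard), so in particular $E_\nu(-s)\in(0,1]$ for every $s\ge 0$; thus, for fixed $t>0$ and every $n$, $0<E_\nu(-bnt^\nu)\le 1$. Combining this with the previous step and the Cauchy--Schwarz inequality in $\ell^2$,
$$\sum_{n=0}^{+\infty}E_\nu(-bnt^\nu)\,|Q_n(x)Q_n(y)|\le\left(\sum_{n=0}^{+\infty}Q_n^2(x)\right)^{1/2}\left(\sum_{n=0}^{+\infty}Q_n^2(y)\right)^{1/2}=\frac{1}{\sqrt{m(x)m(y)}}.$$
Multiplying by $m(x)<+\infty$ shows that the series defining $p_\nu(x,t;y)$ converges absolutely, with the explicit bound $|p_\nu(x,t;y)|\le\sqrt{m(x)/m(y)}$, which is a bonus for later use.

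As an alternative route, avoiding Parseval, one can bound $Q_n(x)$ directly for fixed $x$: from the terminating hypergeometric representation $C_n(x,\alpha)=\sum_{k=0}^{x}\binom{x}{k}(-\alpha)^{-k}\,n(n-1)\cdots(n-k+1)$ one gets $|C_n(x,\alpha)|\le c_x(1+n)^x$, hence $|Q_n(x)|=|C_n(x,\alpha)|\sqrt{\alpha^n/n!}\le c_x(1+n)^x\sqrt{\alpha^n/n!}$, and the super-exponential decay of $\sqrt{\alpha^n/n!}$ makes $\sum_n(1+n)^{x+y}\alpha^n/n!$ convergent by the ratio test. In either approach the only mildly delicate ingredient is the uniform bound $E_\nu(-bnt^\nu)\le 1$ (and the strict positivity, used elsewhere), which follows from known monotonicity properties of the Mittag-Leffler function; everything else is routine bookkeeping. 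I would present the Parseval argument as the main proof, since it is the shortest and simultaneously delivers the pointwise estimate on $p_\nu$.
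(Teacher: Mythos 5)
Your argument is correct, but it takes a genuinely different route from the paper's. The paper invokes the self-duality $C_n(x,\alpha)=C_x(n,\alpha)$ to turn the sum over degrees into a sum of two \emph{fixed} polynomials evaluated along $\mathbb{N}_0$, then splits off finitely many terms so that the tail has constant sign (depending on the parity of $x+y$) and compares it with the orthogonality sum $\sum_n\frac{\alpha^n}{n!}C_x(n,\alpha)C_y(n,\alpha)=e^{\alpha}d_x^2\delta_{x,y}$. You instead read off $\sum_n Q_n^2(y)=1/m(y)$ from Parseval applied to $\mathbf{1}_{\{y\}}$ and finish with Cauchy--Schwarz; note that your identity is exactly the paper's \eqref{eq:seriesconv} specialized to $x=y$, so the two proofs ultimately rest on the same quantity, but your Cauchy--Schwarz step replaces the paper's parity/sign analysis of the cross terms and yields absolute convergence together with the clean pointwise bound $|p_\nu(x,t;y)|\le\sqrt{m(x)/m(y)}$, which the paper does not record and which is indeed useful later. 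Two small remarks: Parseval presupposes completeness of $\{Q_n\}$ in $\ell^2(\mm)$ (true here, since the Poisson moment problem is determinate, and asserted by the paper), but for the convergence claim Bessel's inequality $\sum_n Q_n^2(y)\le 1/m(y)$ already suffices and requires no completeness; and in your alternative route the explicit terminating representation of $C_n(x,\alpha)$ is off by a factor $(-1)^n$ relative to the paper's generating-function normalization $\sum_n C_n(x,\alpha)\frac{t^n}{n!}=e^{-t}\left(1+\frac{t}{\alpha}\right)^x$, which is harmless since only $|C_n(x,\alpha)|$ enters that estimate.
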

\begin{proof}
	To show the convergence of $p_\nu(x,t;y)$, we need the following \textit{self-duality} property of the Charlier polynomials (see \cite[Equation $2.7.10a$]{NikiforovUvarovSuslov1991}):
	\begin{equation}\label{eq:selfdual}
	C_n(x,\alpha)=C_x(n,\alpha), \ \forall n,x \in \mathbb{N}_0.
	\end{equation}
	From this relation we have
	\begin{align*}
	p_\nu(x,t;y)&=m(x)\sum_{n=0}^{+\infty}E_\nu(-bnt^\nu)Q_n(x)Q_n(y)\\&=m(x)\sum_{n=0}^{+\infty}\frac{1}{d_n^2}E_\nu(-bnt^\nu)C_n(x,\alpha)C_n(y,\alpha)\\&=m(x)\sum_{n=0}^{+\infty}\frac{1}{d_n^2}E_\nu(-bnt^\nu)C_x(n,\alpha)C_y(n,\alpha),
	\end{align*}
	hence we need to show the convergence of the series
	\begin{equation*}
	\sum_{n=0}^{+\infty}\frac{1}{d_n^2}E_\nu(-bnt^\nu)C_x(n,\alpha)C_y(n,\alpha).
	\end{equation*}
	Now, let us observe that equation \eqref{eq:selfdual} made us fix the degrees of the polynomials involved in the series. Thus, let us denote with $z_x$ and $z_y$ the last real zeroes of $C_x(\cdot,\alpha)$ and $C_y(\cdot,\alpha)$ and then let us consider $n_0>\max\{z_x,z_y\}$.\\
	We will equivalently prove that the series
	\begin{equation}\label{eq:seriestrunc}
	\sum_{n=n_0}^{+\infty}\frac{1}{d_n^2}E_\nu(-bnt^\nu)C_x(n,\alpha)C_y(n,\alpha)
	\end{equation}
	converges. To do this, we need to recall another property of the Charlier polynomials. In particular it is known (see \cite[Table $2.3$]{NikiforovUvarovSuslov1991}) that the director coefficient of $C_n(\cdot,\alpha)$ is given by
	\begin{equation*}
	c_n=\frac{1}{(-\alpha)^n}.
	\end{equation*}
	In particular, recalling that $\alpha=\frac{a}{b}$, $\alpha>0$ since $a,b>0$ and then $c_n>0$ if $n$ is even and $c_n<0$ if $n$ is odd.\\
	By using this observation, we can distinguish two cases:
	\begin{itemize}
		\item[i] If $x+y$ is even, then, since $c_xc_y>0$, for any $n \ge n_0$ $C_x(n,\alpha)C_y(n,\alpha)>0$ and then the series \eqref{eq:seriestrunc} admits only positive summands. Recalling that $E_\nu(-bnt^\nu)\le 1$ we obtain
		\begin{equation*}
		\sum_{n=n_0}^{+\infty}\frac{1}{d_n^2}E_\nu(-bnt^\nu)C_x(n,\alpha)C_y(n,\alpha)\le\sum_{n=n_0}^{+\infty}\frac{1}{d_n^2}C_x(n,\alpha)C_y(n,\alpha)
		\end{equation*}
		where the RHS series converges since
		\begin{equation}\label{eq:seriesconv}
		\sum_{n=0}^{+\infty}\frac{1}{d_n^2}C_x(n,\alpha)C_y(n,\alpha)=e^{-\alpha}\sum_{n=0}^{+\infty}\frac{\alpha^n}{n!}e^{\alpha}C_x(n,\alpha)C_y(n,\alpha)=e^{\alpha}d_{x}^2\delta_{x,y}.
		\end{equation}
		\item[ii] If $x+y$ is odd, then, since $c_xc_y<0$, for any $n \ge n_0$ $C_x(n,\alpha)C_y(n,\alpha)<0$ and then the series \eqref{eq:seriestrunc} admits only negative summands. As before, we obtain
		\begin{equation*}
		\sum_{n=n_0}^{+\infty}\frac{1}{d_n^2}E_\nu(-bnt^\nu)C_x(n,\alpha)C_y(n,\alpha)\ge\sum_{n=n_0}^{+\infty}\frac{1}{d_n^2}C_x(n,\alpha)C_y(n,\alpha)
		\end{equation*}
		where the RHS series converges for equation \eqref{eq:seriesconv}.
	\end{itemize}
\end{proof}
With Lemma \ref{lem:fundsol}, we have exploited the fundamental solution of the equation in \eqref{eq:prob}. Now we have to show that a function in the form \eqref{eq:specdecheur} is a solution for such fractional Cauchy problem. To do this, let us first show a technical lemma.
\begin{lem}\label{lem:teclemma}
	For any $t_0>0$, there exists a constant $K(t_0,\nu)$ such that
	\begin{equation*}
	bnE_\nu(-bnt^\nu)\le K(t_0,\nu), \ t \in [t_0,+\infty).
	\end{equation*}
\end{lem}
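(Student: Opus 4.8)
The plan is to reduce the two-parameter bound on $bnE_\nu(-bnt^\nu)$ to a single uniform estimate for the one-variable function $x\mapsto xE_\nu(-x)$ on $[0,+\infty)$, and then to absorb the resulting factor $t^{-\nu}$ using the hypothesis $t\ge t_0$.

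First I would prove that $M_\nu:=\sup_{x\ge 0}xE_\nu(-x)<+\infty$. The map $x\mapsto xE_\nu(-x)$ is continuous on $[0,+\infty)$ (the Mittag--Leffler series \eqref{eq:ML} has infinite radius of convergence), it vanishes at $x=0$ because $E_\nu(0)=1$, and the classical asymptotic expansion of the Mittag--Leffler function with negative real argument, namely $E_\nu(-x)=\frac{1}{x\Gamma(1-\nu)}+O(x^{-2})$ as $x\to+\infty$ (see e.g.\ \cite{KilbasSrivastavaTrujillo2006}), yields $\lim_{x\to+\infty}xE_\nu(-x)=\frac{1}{\Gamma(1-\nu)}$. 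A continuous function on $[0,+\infty)$ admitting a finite limit at infinity is bounded there, whence $M_\nu<+\infty$.

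Next, for fixed $n\in\mathbb{N}_0$ and $t\ge t_0$, I would set $x=bnt^\nu\ge 0$ in the previous bound to get $bnt^\nu E_\nu(-bnt^\nu)\le M_\nu$, that is
\[
bnE_\nu(-bnt^\nu)\le \frac{M_\nu}{t^\nu}\le \frac{M_\nu}{t_0^\nu},
\]
where the last inequality uses that $t\mapsto t^{-\nu}$ is decreasing; the case $n=0$ is trivial since both sides vanish. Thus $K(t_0,\nu):=M_\nu\, t_0^{-\nu}$ does the job.

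The only real obstacle is having the correct large-$x$ asymptotics of $E_\nu$ at hand so as to conclude $M_\nu<+\infty$; once that is in place the statement follows by a one-line substitution. Alternatively, one may bypass the asymptotic expansion and invoke the known global estimate $0\le E_\nu(-x)\le \frac{C_\nu}{1+x}$ for $x\ge 0$ (a consequence of the complete monotonicity of $x\mapsto E_\nu(-x)$), which gives directly $bnE_\nu(-bnt^\nu)\le \frac{C_\nu\, bn}{1+bnt^\nu}\le \frac{C_\nu}{t^\nu}\le \frac{C_\nu}{t_0^\nu}$.
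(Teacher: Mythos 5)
Your proof is correct. Your primary route differs from the paper's in one respect: you establish $M_\nu:=\sup_{x\ge 0}xE_\nu(-x)<+\infty$ qualitatively, via continuity of $x\mapsto xE_\nu(-x)$ together with the large-argument asymptotics $E_\nu(-x)=\frac{1}{x\Gamma(1-\nu)}+O(x^{-2})$, and then substitute $x=bnt^\nu$ to get $K(t_0,\nu)=M_\nu t_0^{-\nu}$. The paper instead invokes the explicit global estimate $E_\nu(-x)\le\bigl(1+\frac{x}{\Gamma(1+\nu)}\bigr)^{-1}$ from Simon's comparison theorem and observes that $x\mapsto\frac{x}{1+Cx}$ increases to $1/C$, which yields the fully explicit constant $K(t_0,\nu)=\Gamma(1+\nu)t_0^{-\nu}$. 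Your soft argument buys independence from any sharp one-sided bound (only the leading asymptotics are needed), at the price of a non-explicit constant; the paper's argument buys an explicit constant at the price of citing a sharper inequality. Your closing alternative, using a bound of the form $E_\nu(-x)\le\frac{C_\nu}{1+x}$, is essentially the paper's proof (though attributing that bound merely to complete monotonicity is a little loose -- the clean reference is Simon's theorem, which the paper cites). In all versions the substitution step and the monotonicity of $t\mapsto t^{-\nu}$ close the argument, and the $n=0$ case is trivially handled as you note.
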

\begin{proof}
	Let us use the uniform estimate for the Mittag-Leffler function given in \cite[Theorem $4$]{Simon2014}:
	\begin{equation*}
	bnE_\nu(-bnt^{\nu})\le \frac{bn}{1+\frac{bnt^{\nu}}{\Gamma(1+\nu)}}.
	\end{equation*}
	Consider the function
	\begin{equation*}
	f(x)=\frac{x}{1+Cx}, \quad C=\frac{t^\nu}{\Gamma(1+\nu)}.
	\end{equation*}
	Thus we have
	\begin{equation*}
	f'(x)=\frac{1}{(1+Cx)^2}>0
	\end{equation*}
	hence the function $f$ is strictly increasing. So we have
	\begin{equation*}
	f(x)\le \lim_{x \to +\infty}f(x)=\frac{1}{C}=\frac{\Gamma(1+\nu)}{t^\nu},
	\end{equation*}
	and then
	\begin{equation*}
	bnE_\nu(-bnt^{\nu})\le \frac{bn}{1+\frac{bnt^{\nu}}{\Gamma(1+\nu)}}\le \frac{\Gamma(1+\nu)}{t^\nu}\le \frac{\Gamma(1+\nu)}{t_0^\nu}=:K(t_0,\nu).
	\end{equation*}
\end{proof}
Now let us exhibit a strong solution for our fractional Cauchy problem.
\begin{thm}\label{thm:strongback}
	Let $g \in \ell^2(\mm)$ with decomposition $g(x)=\sum_{n=0}^{+\infty}g_nQ_n(x)$. Then the fractional difference-differential Cauchy problem
	\begin{equation}\label{eq:backprob}
	\begin{cases}
	\frac{\partial^\nu u}{\partial t^\nu}(t,x)=\cG u(t,x) \\
	u(0,x)=g(x),
	\end{cases}
	\end{equation}
	admits a strong solution $u$ in the form
	\begin{equation}\label{eq:strongsol1}
	u(t,x)=\sum_{y=0}^{+\infty}p_\nu(t,y;x)g(y)=\sum_{n=0}^{\infty}E_\nu(-bnt^\nu)Q_n(x)g_n.
	\end{equation}
\end{thm}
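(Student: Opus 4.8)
The plan is to verify directly that $u(t,x)=\sum_{n\ge 0}E_\nu(-bnt^\nu)Q_n(x)g_n$ has every property of a strong solution, and separately to identify it with $\sum_{y}p_\nu(t,y;x)g(y)$. I would start from the elementary estimates that make all the manipulations legitimate. By the self-duality \eqref{eq:selfdual}, $C_n(x,\alpha)=C_x(n,\alpha)$ is for fixed $x$ a polynomial in $n$, and $1/d_n^2=\alpha^n/n!$; hence $\sum_{n}n^{k}Q_n(x)^2=\sum_n n^k C_x(n,\alpha)^2\alpha^n/n!$ converges for every $x\in\mathbb{N}_0$ and every $k\ge 0$ (it is of the type \eqref{eq:seriesconv} with an extra polynomial weight), and with $\{g_n\}\in\ell^2$ and Cauchy--Schwarz this gives $\sum_n n^k|g_n Q_n(x)|<+\infty$. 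Since $0\le E_\nu(-bnt^\nu)\le 1$, the series for $u(t,x)$ then converges absolutely for all $t\ge 0$, $x\in\mathbb{N}_0$, with $|u(t,x)|\le\Norm{g}{\ell^2}\big(\sum_n Q_n(x)^2\big)^{1/2}$ uniformly in $t$; dominated convergence on partial sums gives $t\mapsto u(t,x)\in C[0,+\infty)$, and $u(0,x)=\sum_n g_n Q_n(x)=g(x)$ since convergence in $\ell^2(\mm)$ forces pointwise convergence (each atom of $\mm$ having positive mass). Likewise $u(t,\cdot)\in\ell^2(\mm)$ because $\sum_n E_\nu(-bnt^\nu)^2 g_n^2\le\sum_n g_n^2<+\infty$.

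For the identity $u(t,x)=\sum_{y}p_\nu(t,y;x)g(y)$ I would avoid a double-sum interchange and argue by Parseval in $\ell^2(\mm)$: by \eqref{eq:seriesconv} the function $y\mapsto p_\nu(t,y;x)/m(y)=\sum_n E_\nu(-bnt^\nu)Q_n(x)Q_n(y)$ lies in $\ell^2(\mm)$ with Fourier coefficients $E_\nu(-bnt^\nu)Q_n(x)$ (its squared norm is $\sum_n E_\nu(-bnt^\nu)^2Q_n(x)^2\le\sum_n Q_n(x)^2<+\infty$), so $\sum_y p_\nu(t,y;x)g(y)=\langle p_\nu(t,\cdot;x)/m(\cdot),g\rangle_{\ell^2(\mm)}=\sum_n E_\nu(-bnt^\nu)Q_n(x)g_n$, the sum over $y$ converging absolutely by Cauchy--Schwarz in $\ell^2(\mm)$.

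It remains to verify the fractional equation, which I would do with the Laplace transform prepared before the statement. Boundedness of $u(\cdot,x)$ makes $\widetilde u(s,x)$ well defined for $s>0$, and interchanging sum and integral (legitimate as above, with $\int_0^{+\infty}e^{-st}E_\nu(-bnt^\nu)\,dt=s^{\nu-1}/(s^\nu+bn)$) gives $\widetilde u(s,x)=\sum_n g_n Q_n(x)\,s^{\nu-1}/(s^\nu+bn)$. Since $\cG f(x)$ depends only on $f(x-1),f(x),f(x+1)$, the operator $\cG$ commutes with the integral defining $\widetilde{\,\cdot\,}$, so using $\cG Q_n=-bnQ_n$ one finds $\widetilde{\cG u}(s,x)=-\sum_n bn\,g_n Q_n(x)\,s^{\nu-1}/(s^\nu+bn)$, which is exactly $s^\nu\widetilde u(s,x)-s^{\nu-1}u(0,x)$, i.e. the Laplace transform of $\frac{\partial^\nu u}{\partial t^\nu}(\cdot,x)$. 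As $\cG u(t,x)=-\sum_n bn\,E_\nu(-bnt^\nu)g_n Q_n(x)$ is continuous on $(0,+\infty)$ — the convergence being uniform on every $[t_0,+\infty)$ by Lemma \ref{lem:teclemma} ($bnE_\nu(-bnt^\nu)\le K(t_0,\nu)$) together with $\sum_n|g_n Q_n(x)|<+\infty$ — uniqueness of the Laplace transform yields $\frac{\partial^\nu u}{\partial t^\nu}(t,x)=\cG u(t,x)$ for every $t>0$. (Alternatively one computes $\int_0^t(t-\tau)^{-\nu}u(\tau,x)\,d\tau$ explicitly as a series of two-parameter Mittag--Leffler functions, differentiates it term by term — again justified on compact subsets of $(0,+\infty)$ by Lemma \ref{lem:teclemma} — and reads off the identity from \eqref{eq:caputoder} directly.)

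The step I expect to be most delicate is the behaviour at $t=0$: each $E_\nu(-bnt^\nu)$ has a $t^\nu$ cusp at the origin, so the termwise $t$-derivative series converges uniformly only on $[t_0,+\infty)$ and not up to $0$; continuity at $t=0$ must therefore be handled separately through $0\le E_\nu(-bnt^\nu)\le 1$, and the equation is asserted only on $(0,+\infty)$, which is precisely what a strong solution requires. Once this bookkeeping is in place, all the properties of a strong solution for \eqref{eq:backprob} are established.
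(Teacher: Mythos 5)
Your proposal is correct but follows a partly different route from the paper. The convergence estimates are the same in substance: both you and the paper use the self-duality $C_n(x,\alpha)=C_x(n,\alpha)$ together with Cauchy--Schwarz to get $\sum_n|g_nQ_n(x)|\le\Norm{g}{\ell^2(\mm)}e^{\alpha/2}d_x<+\infty$, and both use the bound $bnE_\nu(-bnt^\nu)\le K(t_0,\nu)$ of Lemma \ref{lem:teclemma} to control the differentiated series on $[t_0,+\infty)$. Where you diverge is in how the equation itself is verified: the paper works directly with the definition \eqref{eq:caputoder}, interchanging the Caputo derivative with the series term by term via Rudin's theorems on uniform convergence (one for the convolution integral, one for the outer $t$-derivative), whereas you take Laplace transforms, compute $\widetilde u(s,x)=\sum_n g_nQ_n(x)s^{\nu-1}/(s^\nu+bn)$, and match $\widetilde{\cG u}$ against $s^\nu\widetilde u-s^{\nu-1}g(x)$. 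The Laplace route is slicker, but be aware that uniqueness of the Laplace transform only identifies $\cG u$ with $\partial^\nu u/\partial t^\nu$ once you know the Caputo derivative of $u$ exists and obeys the transform formula $s^\nu\widetilde u-s^{\nu-1}u(0,x)$; establishing that membership in the admissible class is essentially the term-by-term differentiation you relegate to a parenthesis (or a detour through the equivalent Volterra integral equation), so the direct computation is repackaged rather than avoided. On the other hand, your Parseval argument in $\ell^2(\mm)$ for the first equality in \eqref{eq:strongsol1} is a genuine addition: the paper writes $\sum_yp_\nu(t,y;x)g(y)=\sum_nE_\nu(-bnt^\nu)Q_n(x)g_n$ without justifying the interchange of the sums over $y$ and $n$, and viewing $y\mapsto p_\nu(t,y;x)/m(y)$ as an element of $\ell^2(\mm)$ with Fourier coefficients $E_\nu(-bnt^\nu)Q_n(x)$ settles it cleanly. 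Your closing remark on the $t^\nu$ cusp at the origin correctly identifies why the differentiated series is only controlled on $[t_0,+\infty)$ and the equation only claimed for $t>0$.
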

\begin{proof}
	First let us observe that obviously if $u$ is in the form \eqref{eq:strongsol1}, then $u(0,x)=g(x)$.\\
	Now, let us notice that
	\begin{multline*}
	\cG E_\nu(-bnt^\nu)Q_n(x)g_n=E_\nu(-bnt^\nu)g_n\cG Q_n(x)\\=-bnE_\nu(-bnt^\nu)g_nQ_n(x)=g_nQ_n(x)\derfrac{E_\nu(-bnt^\nu)}{t}.
	\end{multline*}
	Hence we need to show that the series in \eqref{eq:strongsol1} is convergent at least uniformly in $t$ and that we can change the series with the operators.\\
	Starting from the convergence of the series, by using Cauchy-Schwartz inequality we have
	\begin{align}\label{eq:432}
	\begin{split}
	\sum_{n=0}^{+\infty}|E_\nu(-bnt^\nu)Q_n(x)g_n|&\le\sum_{n=0}^{+\infty}|Q_n(x)g_n|\\&\le \left(\sum_{n=0}^{+\infty}\frac{\alpha^n}{n!}C_n^2(x,\alpha)\right)^{\frac{1}{2}}\left(\sum_{n=0}^{+\infty}g_n^2\right)^{\frac{1}{2}}\\&=\Norm{g}{\ell^2(\mm)}\left(\sum_{n=0}^{+\infty}\frac{\alpha^n}{n!}C_x^2(n,\alpha)\right)^{\frac{1}{2}}\\&=\Norm{g}{\ell^2(\mm)}e^{\frac{\alpha}{2}}d_x,
	\end{split}
	\end{align}
	hence the series in \eqref{eq:strongsol1} totally converges.\\
	Now we need to show that one can exchange the operators with the series. To do that, let us first observe that
	\begin{equation*}
	\int_0^t(t-\tau)^{-\nu}u(\tau)d\tau=\int_0^t\frac{u(\tau)}{\nu-1}d(t-\tau)^{1-\nu},
	\end{equation*}
	and since $(t-\tau)^{1-\nu}$ is strictly decreasing in $[0,t]$ we can use \cite[Theorem $7.16$]{Rudin1976} with the total convergence of the series \eqref{eq:strongsol1} to obtain
	\begin{equation*}
	\int_0^t(t-\tau)^{-\nu}\sum_{n=0}^{+\infty}E_\nu(-bn\tau^\nu)Q_n(x)g_nd\tau=\sum_{n=0}^{+\infty}\int_0^t(t-\tau)^{-\nu}E_\nu(-bn\tau^\nu)Q_n(x)g_nd\tau.
	\end{equation*}
	Now we want to use the following relation:
	\begin{align*}
	\der{}{t}\int_0^t(t-\tau)^{-\nu}\sum_{n=0}^{+\infty}E_\nu(-bn\tau^\nu)Q_n(x)g_nd\tau&=\der{}{t}\sum_{n=0}^{+\infty}\int_0^t(t-\tau)^{-\nu}E_\nu(-bn\tau^\nu)Q_n(x)g_nd\tau\\&=\sum_{n=0}^{+\infty}\der{}{t}\int_0^t(t-\tau)^{-\nu}E_\nu(-bn\tau^\nu)Q_n(x)g_nd\tau,
	\end{align*}
	but to do this, by using \cite[Theorem $7.17$]{Rudin1976}, we need to show the uniform convergence of
	\begin{equation*}
	\sum_{n=0}^{+\infty}\der{}{t}\int_0^t(t-\tau)^{-\nu}E_\nu(-bn\tau^\nu)Q_n(x)g_nd\tau,
	\end{equation*}
	in any compact interval included in $(0,+\infty)$. Hence, by definition of Caputo fractional derivative, as given in \eqref{eq:caputoder}, we really need to show the uniform convergence of
	\begin{equation}\label{eq:fracderserie}
	\sum_{n=0}^{+\infty}\derfrac{}{t}E_\nu(-bnt^\nu)Q_n(x)g_n
	\end{equation}
	in any interval of the form $[t_0,+\infty)$. To do this, let us recall that $\derfrac{}{t}E_\nu(-bnt^\nu)=-bnE_\nu(-bnt^\nu)$ and thus we need to show the uniform convergence of
	\begin{equation*}
	\sum_{n=0}^{+\infty}-bnE_\nu(-bnt^\nu)Q_n(x)g_n.
	\end{equation*}
	Thus, fix $t_0>0$ and observe that
	\begin{align*}
	\begin{split}
	\sum_{n=0}^{+\infty}|bnE_\nu(-bnt^\nu)Q_n(x)g_n|&\le K(t_0,\nu)\sum_{n=0}^{+\infty}|Q_n(x)g_n|\\&\le K(t_0,\nu)\Norm{g}{\ell^2(\mm)}e^{\frac{\alpha}{2}}d_x, \quad t \in [t_0,+\infty),
	\end{split}
	\end{align*}
	where the first inequality follows from Lemma \ref{lem:teclemma} and the second inequality from Cauchy-Schwartz inequality as done before in \eqref{eq:432}. Hence we have shown the total convergence of \eqref{eq:fracderserie} in any interval of the form $[t_0,+\infty)$.\\
	We have already shown that $\sum_{n=0}^{+\infty}E_\nu(-bnt^\nu)Q_n(x)g_n$ totally converges with respect to $t$: in the same way we have that also $\sum_{n=0}^{+\infty}E_\nu(-bnt^\nu)Q_n(x-1)g_n$ and $\sum_{n=0}^{+\infty}E_\nu(-bnt^\nu)Q_n(x+1)g_n$ totally converge with respect to $t$.\\
	Now, observe that
	\begin{align}
	\begin{split}
	\nabla^-\sum_{n=0}^{+\infty}&E_\nu(-bnt^\nu)Q_n(x)g_n=\sum_{n=0}^{+\infty}E_\nu(-bnt^\nu)Q_n(x)g_n-\sum_{n=0}^{+\infty}E_\nu(-bnt^\nu)Q_n(x-1)g_n\\&=\lim_{N\to+\infty}\sum_{n=0}^{N}E_\nu(-bnt^\nu)Q_n(x)g_n-\lim_{N \to +\infty}\sum_{n=0}^{N}E_\nu(-bnt^\nu)Q_n(x)g_n\\&=\lim_{N \to +\infty}\left(\sum_{n=0}^{N}E_\nu(-bnt^\nu)Q_n(x)g_n-\sum_{n=0}^{N}E_\nu(-bnt^\nu)Q_n(x-1)g_n\right)\\&=\lim_{N \to +\infty}\sum_{n=0}^{N}E_\nu(-bnt^\nu)\nabla^-Q_n(x)g_n\\&=\sum_{n=0}^{+\infty}E_\nu(-bnt^\nu)\nabla^-Q_n(x)g_n,
	\end{split}
	\end{align}
	and in the same way one can show that
	\begin{equation*}
	\Delta \sum_{n=0}^{+\infty}E_\nu(-bnt^\nu)Q_n(x)g_n=\sum_{n=0}^{+\infty}E_\nu(-bnt^\nu)\Delta Q_n(x)g_n.
	\end{equation*}
	By using these last two relations, it is easy to show that
	\begin{equation*}
	\cG\sum_{n=0}^{+\infty}E_\nu(-bnt^\nu)Q_n(x)g_n=\sum_{n=0}^{+\infty}E_\nu(-bnt^\nu)\cG Q_n(x)g_n.
	\end{equation*}
	Finally we have that
	\begin{align*}
	\begin{split}
	\derfrac{}{t}\sum_{n=0}^{+\infty}E_\nu(-bnt^\nu)Q_n(x)g_n&=\sum_{n=0}^{+\infty}\derfrac{}{t}E_\nu(-bnt^\nu)Q_n(x)g_n\\&=\sum_{n=0}^{+\infty}\cG E_\nu(-bnt^\nu)Q_n(x)g_n\\&=\cG\sum_{n=0}^{+\infty}E_\nu(-bnt^\nu)Q_n(x)g_n,
	\end{split}
	\end{align*}
	and we have concluded the proof.
\end{proof}
The same strategy can be used to exhibit a strong solution to the fractional forward Kolmogorov equation.
\begin{thm}\label{thm:strongfor}
	Let $f$ be a function such that $f/m \in \ell^2(\mm)$ with decomposition {$f(x)/m(x)=\sum_{n=0}^{+\infty}f_nQ_n(x)$}. Then the fractional difference-differential Cauchy problem
	\begin{equation}\label{eq:forprob}
	\begin{cases}
	\frac{\partial^\nu u}{\partial t^\nu}(t,x)=\cL u(t,x)\\
	u(0,x)=f(x),
	\end{cases}
	\end{equation}
	admits a strong solution $u=u(t,x)$ given by
	\begin{equation*}
	u(t,x)=\sum_{y=0}^{+\infty}p_\nu(t,x;y)f(y)=m(x)\sum_{n=0}^{+\infty}E_\nu(-bnt^\nu)Q_n(x)f_n.
	\end{equation*}
\end{thm}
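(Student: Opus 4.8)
The plan is to deduce the forward statement from the backward one (Theorem \ref{thm:strongback}) by exploiting the fact that $\cL$ is conjugate to $\cG$ through the Poisson weight $m$. The key algebraic identity I would establish first is
\begin{equation}\label{eq:conjugation}
\cL(m(\cdot)h(\cdot))(x)=m(x)\,\cG h(x),\qquad x\in\mathbb{N}_0,
\end{equation}
for an arbitrary $h:\mathbb{N}_0\to\R$ with the convention $h(-1)=0$. Using the matrix form of $\cL$, the left-hand side equals $b(x+1)m(x+1)h(x+1)-(a+bx)m(x)h(x)+a\,m(x-1)h(x-1)$; the Poisson recurrences $(x+1)m(x+1)=\alpha\,m(x)$ and $\alpha\,m(x-1)=x\,m(x)$, together with $a=b\alpha$, rewrite this as $m(x)\bigl[a\,h(x+1)-(a+bx)h(x)+bx\,h(x-1)\bigr]$, which is exactly $m(x)\,\cG h(x)$ by the matrix form of $\cG$; the boundary term at $x=0$ works out the same way. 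In particular $\cL(m Q_n)=-bn\,m Q_n$, so $mQ_n$ is an eigenfunction of $\cL$ with eigenvalue $-bn$.

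Next I would set $g:=f/m\in\ell^2(\mm)$, so that $g(x)=\sum_{n}f_nQ_n(x)$, and let
\[
v(t,x):=\sum_{n=0}^{+\infty}E_\nu(-bnt^\nu)Q_n(x)f_n .
\]
By Theorem \ref{thm:strongback}, $v$ is a strong solution of $\partial^\nu_t v=\cG v$ with $v(0,\cdot)=g$; moreover all the convergence facts proved there hold for $v$ verbatim (total convergence in $t$ on $\mathbb{N}_0$, and the fact that both $\cG$ and $\partial^\nu_t$ may be exchanged with the series, the latter on every interval $[t_0,+\infty)$). Then I would define $u(t,x):=m(x)v(t,x)$. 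Using $\langle Q_n,f/m\rangle_{\ell^2(\mm)}=f_n$ and the total convergence to justify interchanging the two summations (as in Lemma \ref{lem:fundsol}), one checks that $u(t,x)=\sum_{y=0}^{+\infty}p_\nu(t,x;y)f(y)$, so $u$ is of the form asserted in the statement; and the initial condition is immediate, $u(0,x)=m(x)g(x)=f(x)$.

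Finally it remains to verify $\partial^\nu_t u=\cL u$. Since $m(x)$ does not depend on $t$, $\partial^\nu_t u(t,x)=m(x)\,\partial^\nu_t v(t,x)=m(x)\,\cG v(t,x)$ by Theorem \ref{thm:strongback}, and $m(x)\,\cG v(t,x)=\cL(m(\cdot)v(t,\cdot))(x)=\cL u(t,x)$ by \eqref{eq:conjugation} applied with $h=v(t,\cdot)$. This closes the argument. The only step needing genuine verification is the intertwining identity \eqref{eq:conjugation}, which is just the elementary computation with Poisson weights sketched above; everything else is inherited from Theorem \ref{thm:strongback}. (Alternatively one could mimic the proof of Theorem \ref{thm:strongback} directly, bounding $\sum_n|E_\nu(-bnt^\nu)m(x)Q_n(x)f_n|\le m(x)\Norm{f/m}{\ell^2(\mm)}e^{\frac{\alpha}{2}}d_x$ via Cauchy--Schwarz and the self-duality \eqref{eq:selfdual}, but the reduction above avoids repeating that analysis.)
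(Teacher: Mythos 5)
Your proposal is correct and follows essentially the same route as the paper: the heart of both arguments is the intertwining identity $\cL(m(\cdot)h(\cdot))(x)=m(x)\cG h(x)$ (the paper derives it for $h=Q_n$ via the discrete Leibnitz rules and the Pearson equation, while you derive it for arbitrary $h$ from the matrix entries and the Poisson recurrences), and all convergence and series--operator interchange issues are inherited from Theorem \ref{thm:strongback} exactly as in the paper. Applying the identity once to the summed function $v$ rather than term by term is a harmless reorganization.
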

\begin{proof}
	Since $\{f_{n}\}_{n \in \mathbb{N}}\in \ell^2$, then, from the previous theorem, we already know that we can exchange operators and series. We only need to prove that the single summand of the series is a solution of the equation and that $u(0,x)=f(x)$.\\
	Let us first notice that
	\begin{equation*}
	u(0,x)=m(x)\sum_{n=0}^{+\infty}Q_n(x)f_n=m(x)\frac{f(x)}{m(x)}=f(x),
	\end{equation*}
	thus the function $u$ satisfies the given initial condition.\\
	To show that the single summand is solution of the equation, let us write $\cL$ as
	\begin{equation*}
	\cL h(x)=-\nabla^-((a-bz)h(z))(x)+\Delta(bzh(z))(x),
	\end{equation*}
	for a generic function $h$.\\
	Moreover, let us observe that
	\begin{align*}
	\begin{split}
	\cG h(x)&=(a-bx)\nabla^-h(x)+a\Delta h(x)\\
	&=ah(x)-ah(x-1)-bx\nabla^-h(x)+ah(x+1)-2ah(x)+ah(x-1)\\
	&=ah(x+1)-ah(x)-bx\nabla^-h(x)\\
	&=a\nabla^+h(x)-bx\nabla^-h(x).
	\end{split}
	\end{align*}
	Let us also recall that $m$ solves a discrete Pearson equation:
	\begin{equation}\label{eq:discPear}
	\nabla^+(b\cdot m(z))(x)=(a-bx)m(x).
	\end{equation}
	Now, let us observe that
	\begin{equation*}
	\cL(m(z)Q_n(z)E_\nu(-bnt^\nu)f_n)(x)=f_nE_\nu(-bnt^\nu)\cL(m(z)Q_n(z))(x),
	\end{equation*}
	hence we will only study $\cL(m(\cdot)Q_n(\cdot))$. In particular we have
	\begin{equation*}
	\cL(m(z)Q_n(z))(x)=-\nabla^-((a-bz)m(z)Q_n(z))(x)+\Delta(bz m(z)Q_n(z))(x),
	\end{equation*}
	hence, by using the Discrete Leibnitz Rule (\eqref{eq:backLeibrule} and \eqref{eq:LapLeibrule}), we obtain
	\begin{align*}
	\begin{split}
	\cL(m(z)Q_n(z))(x)&=-[Q_n(x)\nabla^-((a-bz)m(z))(x)+(a-b(x-1))m(x-1)\nabla^-Q_n(x)]\\&+Q_n(x)\Delta(bz m(z))(x)+b(x+1)m(x+1)\nabla^+Q(x)+\\&-b(x-1)m(x-1)\nabla^-Q_n(x)\\
	&=Q_n(x)[-\nabla^-((a-bz)m(z))(x)+\Delta(bz m(z))(x)]+\\
	&-am(x-1)\nabla^-Q_n(x)+b(x+1)m(x+1)\nabla^+Q_n(x).
	\end{split}
	\end{align*}
	First let us observe that $\Delta=\nabla^-\nabla^+$, then
	\begin{align*}
	\begin{split}
	-\nabla^-((a-bz)m(z))+\Delta(bz m(z))&=-\nabla^-((a-bz)m(z))+\nabla^-\nabla^+(bz m(z))\\&=\nabla^-(\nabla^+(bz m(z))(x)-(a-bx)m(x))=0,
	\end{split}
	\end{align*}
	since $m$ satisfies equation \eqref{eq:discPear}. Moreover
	\begin{equation*}
	am(x-1)=a\frac{\alpha^{x-1}}{(x-1)!}e^{-\alpha}=\frac{a}{\alpha}xm(x)=bxm(x),
	\end{equation*}
	while
	\begin{equation*}
	b(x+1)m(x+1)=b(x+1)\frac{\alpha^{x+1}}{(x+1)!}e^{-\alpha}=b\alpha m(x)=am(x),
	\end{equation*}
	thus
	\begin{align*}
	\begin{split}
	\cL(m(z)Q_n(z))(x)&=-bxm(x)\nabla^-Q_n(x)+am(x)\nabla^+Q_n(x)\\&=m(x)[\nabla^+Q_n(x)-bx\nabla^-Q_n(x)]\\&=m(x)\cG Q_n(x).
	\end{split}
	\end{align*}
	Finally, we obtain:
	\begin{align*}
	\begin{split}
	\cL(m(z)Q_n(z)E_\nu(-bnt^\nu)f_n)&=f_nE_\nu(-bnt^\nu)\cL(m(z)Q_n(z))\\&=f_nE_\nu(-bnt^\nu)m(x)\cG Q_n(x)\\&=-bnf_nE_\nu(-bnt^\nu)m(x)Q_n(x)\\&=f_nm(x)Q_n(x)\derfrac{E_\nu(-bnt^\nu)}{t}\\&=\derfrac{}{t}(f_nm(x)Q_n(x)E_\nu(-bnt^\nu)).
	\end{split}
	\end{align*}
\end{proof}
\begin{rmk}
	It is easy to see that $p_\nu(t,x;y)$ is strong solution of the fractional backward equation
	\begin{equation}\label{eq:fundback}
	\begin{cases}
	\derfrac{p}{t}(t,x;y)=\cG p(t,x;y)\\
	p_\nu(0,x;y)=\delta_{x,y},
	\end{cases}
	\end{equation}
	where $\cG$ operates on $y$, and is also strong solution of the fractional forward equation
	\begin{equation}\label{eq:fundfor}
	\begin{cases}
	\derfrac{p}{t}(t,x;y)=\cL p(t,x;y)\\
	p_\nu(0,x;y)=\delta_{x,y},
	\end{cases}
	\end{equation}
	where $\cL$ operates on $x$. In particular, as shown by Theorems \ref{thm:strongback} and \ref{thm:strongfor}, it is the fundamental solution of such equations.
\end{rmk}
\section{Stochastic representation of the solutions}\label{sec5}
Now we want to exhibit a process whose ``transition probability'' is the fundamental solution $p_\nu(t,x;y)$ we have described previously.\\
To do this, let us consider a classical immigration-death process $N_1(t)$ (as defined before). Let us also consider a $\nu$-stable subordinator $\sigma_\nu(t)$ with Laplace transform
\begin{equation*}
\E[e^{-s\sigma_\nu(t)}]=e^{-ts^\nu}, \ s>0, \ \nu \in (0,1)
\end{equation*}
and its inverse process (or first passage time process) $L_\nu(t)$ defined as
\begin{equation*}
L_\nu(t):=\inf\{s>0: \ \sigma_\nu(s)>t\}.
\end{equation*}
The latter admits density (see \cite{Aletti2018,MeerschaertStraka2013})
\begin{equation*}
\bP(L_\nu(t) \in dy)=f_\nu(y,t)dy=\frac{t}{\nu}\frac{1}{y^{1+\frac{1}{\nu}}}g_\nu\left(\frac{t}{y^{\frac{1}{\nu}}}\right)dy \ y \ge 0, \ t>0,
\end{equation*}
where $g_\nu(x)$ is the density of $\sigma_\nu(1)$ given by 
\begin{equation*}
g_\nu(x)=\frac{1}{\pi}\sum_{k=1}^{+\infty}(-1)^{k+1}\frac{\Gamma(\nu k+1)}{k!}\frac{1}{x^{\nu k+1}}, \ x \ge 0.
\end{equation*}
Alternatives for $f_\nu(y,t)$ are given in \cite{Kataria2018,Kumar2015}.\\
Thus, let us define the \textit{fractional immigration-death process} as $N_\nu(t):=N_1(L_\nu(t))$. This is a semi-Markov process as defined in \cite{GihmanSkorohod11975}. However, we say that such process admits a transition probability mass $p_\nu(t,x;y)$ if for any $B \subseteq \mathbb{N}_0$:
\begin{equation*}
\bP(N_\nu(t)\in B| \ N_\nu(0)=y)=\sum_{x \in B}p_\nu(t,x;y).
\end{equation*}
Hence, we can use such process to characterize the fundamental solution we found in the previous section.
\begin{thm}\label{thm:pnu}
	The process $N_\nu(t)$ admits a transition probability mass $p_\nu(t,x;y)$ in the form \eqref{eq:fundsol}.
\end{thm}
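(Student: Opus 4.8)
The plan is to use the conditioning formula for the time-changed process. By construction $N_\nu(t) = N_1(L_\nu(t))$, where $N_1$ is the classical immigration-death process with transition probabilities $p(t,x;y)$, and $L_\nu(t)$ is independent of $N_1$ with density $f_\nu(s,t)$. Hence, for any $x,y \in \mathbb{N}_0$,
\begin{equation*}
\bP(N_\nu(t)=x \mid N_\nu(0)=y) = \int_0^{+\infty} p(s,x;y) f_\nu(s,t)\,ds = \int_0^{+\infty} p(s,x;y)\,\bP(L_\nu(t)\in ds),
\end{equation*}
so I need to show this integral equals $p_\nu(t,x;y)$ as given in \eqref{eq:fundsol}. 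First I would insert the spectral expansion $p(s,x;y)=m(x)\sum_{n=0}^{+\infty}e^{-bns}Q_n(x)Q_n(y)$ valid for the classical process, and then interchange the integral with the series.

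The key analytic fact is the well-known Laplace-transform identity for the density of the inverse stable subordinator: for $\lambda \ge 0$,
\begin{equation*}
\int_0^{+\infty} e^{-\lambda s} f_\nu(s,t)\,ds = E_\nu(-\lambda t^\nu).
\end{equation*}
Applying this with $\lambda = bn$ to each term gives $\int_0^{+\infty} e^{-bns} f_\nu(s,t)\,ds = E_\nu(-bnt^\nu)$, so that termwise integration yields exactly
\begin{equation*}
m(x)\sum_{n=0}^{+\infty} E_\nu(-bnt^\nu) Q_n(x) Q_n(y) = p_\nu(t,x;y).
\end{equation*}
The remaining obligation is to justify the interchange of sum and integral. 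Here I would use the self-duality argument already developed in the proof of Lemma \ref{lem:fundsol}: fixing $x,y$ determines the degrees of the polynomials $C_x(\cdot,\alpha)$ and $C_y(\cdot,\alpha)$, and for $n$ beyond the last zeros of these polynomials the terms $\frac{1}{d_n^2}e^{-bns}C_x(n,\alpha)C_y(n,\alpha)$ all have a fixed sign; combined with the domination $e^{-bns}\le 1$ and the convergence \eqref{eq:seriesconv}, one gets (after splitting off the finitely many initial terms, which are handled by bounded convergence) a dominating function integrable against $f_\nu(\cdot,t)\,ds$, so Fubini–Tonelli applies.

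I would then conclude by summing over $x\in B$ to recover $\bP(N_\nu(t)\in B\mid N_\nu(0)=y)=\sum_{x\in B}p_\nu(t,x;y)$, which is precisely the definition of $p_\nu(t,x;y)$ being the transition probability mass of $N_\nu(t)$ introduced before the statement. The main obstacle I anticipate is the rigorous justification of the interchange of series and integral: unlike in Lemma \ref{lem:fundsol}, where convergence was only needed for fixed $t$, here one must control the series uniformly enough in the subordination variable $s$ to integrate against $f_\nu(s,t)$, and the partial sums $\sum_{n=0}^{N}e^{-bns}Q_n(x)Q_n(y)$ are not obviously uniformly bounded in $s$ and $N$. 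The fixed-sign-tail trick from Lemma \ref{lem:fundsol} is the right device, but some care is needed because the relevant positive series $\sum_n \frac{1}{d_n^2}C_x(n,\alpha)C_y(n,\alpha)$ telescopes to $e^{\alpha}d_x^2\delta_{x,y}$ — so for $x\neq y$ the tail sum of absolute values is genuinely finite and one gets an honest integrable dominating function, while the finite head is trivially dominated; assembling these two pieces cleanly is the only delicate point.
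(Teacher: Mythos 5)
Your proposal is correct and follows essentially the same route as the paper's proof: conditioning on the independent time change $L_\nu(t)$, inserting the Karlin--McGregor spectral expansion of $p_1$, applying the Laplace identity $\int_0^{+\infty}e^{-\lambda s}f_\nu(s,t)\,ds=E_\nu(-\lambda t^\nu)$ termwise, and justifying the interchange via the self-duality fixed-sign-tail argument together with the domination by $\sum_n \frac{1}{d_n^2}C_x(n,\alpha)C_y(n,\alpha)$. The paper handles the Fubini step by first integrating over $[\tau_0,+\infty)$ and letting $\tau_0\to 0$ by dominated convergence, which is equivalent to your Fubini--Tonelli formulation; the delicate points you flag are exactly the ones the paper addresses.
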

\begin{proof}
	Let us first recall that (see, for instance, \cite{MeerschaertStraka2013}) the process $L_\nu(t)$ admits a density $f_t(\tau)=\bP(L_\nu(t)\in d\tau)$. Moreover, let us recall (see \cite{Bingham1971}) that
	\begin{equation*}
	\int_0^{+\infty}e^{-s\tau}f_t(\tau)d\tau=E_\nu(-st^\nu), \ s>0.
	\end{equation*}
	Now, observe that for any $B \subseteq \mathbb{N}_0$, since $N_1(t)$ admits a transition probability mass, we have
	\begin{align*}
	\begin{split}
	\bP(N_\nu(t)\in B|N_\nu(0)=y)&=\int_0^{+\infty}\bP(N_1(\tau)\in B|N_1(0)=y)f_t(\tau)d\tau\\
	&=\int_0^{+\infty}\sum_{x \in B}p_1(\tau,x;y)f_t(\tau)d\tau.
	\end{split}
	\end{align*}
	Now, if $B$ is a finite set, we have
	\begin{equation*}
	\int_0^{+\infty}\sum_{x \in B}p_1(\tau,x;y)f_t(\tau)d\tau=\sum_{x \in B}\int_0^{+\infty}p_1(\tau,x;y)f_t(\tau)d\tau.
	\end{equation*}
	If $B$ is infinite, let us consider the sets $I_m:=\{x \in \mathbb{N}_0: \ x \le m\}$ and $B_m:=B \cap I_m$. Thus, $B_m$ is finite and then
	\begin{equation*}
	\int_0^{+\infty}\sum_{x \in B_m}p_1(\tau,x;y)f_t(\tau)d\tau=\sum_{x \in B_m}\int_0^{+\infty}p_1(\tau,x;y)f_t(\tau)d\tau.
	\end{equation*}
	Since $p_1(\tau,x;y)f_t(\tau)$ is non-negative, we can use the monotone convergence theorem to obtain, taking the limit as $m \to +\infty$
	\begin{equation*}
	\int_0^{+\infty}\sum_{x \in B}p_1(\tau,x;y)f_t(\tau)d\tau=\sum_{x \in B}\int_0^{+\infty}p_1(\tau,x;y)f_t(\tau)d\tau.
	\end{equation*}
	Now let us only consider
	\begin{equation*}
	\int_0^{+\infty}p_1(\tau,x;y)f_t(\tau)d\tau,
	\end{equation*}
	and recall that (see, for instance, \cite{Kuznetsov2004} for the specific case of the immigration-death process, but in general such decomposition can be found in \cite{KarlinMcGregor1957a,KarlinMcGregor1957b})
	\begin{equation*}
	p_1(\tau,x;y)=m(x)\sum_{n=0}^{+\infty}e^{-bn\tau}Q_n(x)Q_n(y).
	\end{equation*}
	Hence we have
	\begin{equation*}
	\int_0^{+\infty}p_1(\tau,x;y)f_t(\tau)d\tau=m(x)\int_0^{+\infty}\sum_{n=0}^{+\infty}e^{-bn\tau}Q_n(x)Q_n(y)f_t(\tau)d\tau.
	\end{equation*}
	Now we have to show that we can exchange integral and series. To do this, let us first observe that
	\begin{align*}
	\begin{split}
	\sum_{n=0}^{+\infty}e^{-bn\tau}Q_n(x)Q_n(y)f_t(\tau)&=\sum_{n=0}^{+\infty}\frac{\alpha^n}{n!}e^{-bn\tau}C_n(x,\alpha)C_n(y,\alpha)f_t(\tau)\\&=\sum_{n=0}^{+\infty}\frac{\alpha^n}{n!}e^{-bn\tau}C_x(n,\alpha)C_y(n,\alpha)f_t(\tau).
	\end{split}
	\end{align*}
	Let us consider $z_x$ and $z_y$ the last real zeros of $C_x(n,\alpha)$ and $C_y(n,\alpha)$ and consider a $n_0 \in \mathbb{N}$ such that $n_0>\max\{z_x,z_y\}$. Thus we have
	\begin{align*}
	\begin{split}
	\sum_{n=0}^{+\infty}\frac{\alpha^n}{n!}e^{-bn\tau}C_x(n,\alpha)C_y(n,\alpha)f_t(\tau)&=\sum_{n=0}^{n_0}\frac{\alpha^n}{n!}e^{-bn\tau}C_x(n,\alpha)C_y(n,\alpha)f_t(\tau)\\&+\sum_{n=n_0+1}^{+\infty}\frac{\alpha^n}{n!}e^{-bn\tau}C_x(n,\alpha)C_y(n,\alpha)f_t(\tau),
	\end{split}
	\end{align*}
	and
	\begin{align*}
	\begin{split}
	\int_0^{+\infty}\sum_{n=0}^{+\infty}\frac{\alpha^n}{n!}e^{-bn\tau}C_x(n,\alpha)C_y(n,\alpha)f_t(\tau)d\tau&=\int_0^{+\infty}\sum_{n=0}^{n_0}\frac{\alpha^n}{n!}e^{-bn\tau}C_x(n,\alpha)C_y(n,\alpha)f_t(\tau)d\tau\\&+\int_0^{+\infty}\sum_{n=n_0+1}^{+\infty}\frac{\alpha^n}{n!}e^{-bn\tau}C_x(n,\alpha)C_y(n,\alpha)f_t(\tau)d\tau\\&=\sum_{n=0}^{n_0}\frac{\alpha^n}{n!}C_x(n,\alpha)C_y(n,\alpha)\int_0^{+\infty}e^{-bn\tau}f_t(\tau)d\tau\\&+\int_0^{+\infty}\sum_{n=n_0+1}^{+\infty}\frac{\alpha^n}{n!}e^{-bn\tau}C_x(n,\alpha)C_y(n,\alpha)f_t(\tau)d\tau.
	\end{split}
	\end{align*}
	Now, fix $\tau_0>0$ and observe that for $\tau>\tau_0$ and $n \ge n_0+1$ the function 
	\begin{equation*}
	(\tau,n) \mapsto \frac{\alpha^n}{n!}e^{-bn\tau}C_x(n,\alpha)C_y(n,\alpha)f_t(\tau)
	\end{equation*}
	does not change sign, by Fubini's theorem (see \cite[Theorem $8.8$]{Rudin2006}) we have that
	\begin{equation*}
	\int_{\tau_0}^{+\infty}\sum_{n=n_0+1}^{+\infty}\frac{\alpha^n}{n!}e^{-bn\tau}C_x(n,\alpha)C_y(n,\alpha)f_t(\tau)d\tau=\sum_{n=n_0+1}^{+\infty}\frac{\alpha^n}{n!}C_x(n,\alpha)C_y(n,\alpha)\int_{\tau_0}^{+\infty}e^{-bn\tau}f_t(\tau)d\tau.
	\end{equation*}
	Now we have to pass to the limit as $\tau_0 \to 0$. To do this, let us observe that
	\begin{equation*}
	\int_{\tau_0}^{+\infty}e^{-bn\tau}f_t(\tau)d\tau\le \int_{0}^{+\infty}e^{-bn\tau}f_t(\tau)d\tau=E_\nu(-bnt^\nu),
	\end{equation*}
	and let us distinguish two cases.
	\begin{itemize}
		\item[$i)$] If $x+y$ is even, 
		\begin{equation*}
		\frac{\alpha^n}{n!}C_x(n,\alpha)C_y(n,\alpha)\int_{\tau_0}^{+\infty}e^{-bn\tau}f_t(\tau)d\tau \ge 0,
		\end{equation*}
		and in particular we have
		\begin{align*}
		\begin{split} 
		\frac{\alpha^n}{n!}C_x(n,\alpha)C_y(n,\alpha)\int_{\tau_0}^{+\infty}e^{-bn\tau}f_t(\tau)d\tau &\le \frac{\alpha^n}{n!}C_x(n,\alpha)C_y(n,\alpha)E_\nu(-bnt^\nu)\\
		&\le \frac{\alpha^n}{n!}C_x(n,\alpha)C_y(n,\alpha),
		\end{split}
		\end{align*}
		where
		\begin{equation*}
		\sum_{n=n_0+1}^{+\infty}\frac{\alpha^n}{n!}C_x(n,\alpha)C_y(n,\alpha)<+\infty,
		\end{equation*}
		as we observed before. Then we can use dominated convergence theorem to take the limit as $\tau_0 \to 0$ and obtain
		\begin{equation*}
		\int_0^{+\infty}\sum_{n=n_0+1}^{+\infty} C_x(n,\alpha)C_y(n,\alpha)e^{-bn\tau}f_t(\tau)d\tau=\sum_{n=n_0+1}^{+\infty} C_x(n,\alpha)C_y(n,\alpha)\int_0^{+\infty}e^{-bn\tau}f_t(\tau)d\tau.
		\end{equation*}
		\item[$ii)$] If $x+y$ is odd, then
		\begin{equation*}
		\frac{\alpha^n}{n!}C_x(n,\alpha)C_y(n,\alpha)\int_{\tau_0}^{+\infty}e^{-bn\tau}f_t(\tau)d\tau \le 0,
		\end{equation*}
		and in particular we have
		\begin{align*}
		\begin{split} 
		-\frac{\alpha^n}{n!}C_x(n,\alpha)C_y(n,\alpha)\int_{\tau_0}^{+\infty}e^{-bn\tau}f_t(\tau)d\tau &\le -\frac{\alpha^n}{n!}C_x(n,\alpha)C_y(n,\alpha)E_\nu(-bnt^\nu)\\
		&\le -\frac{\alpha^n}{n!}C_x(n,\alpha)C_y(n,\alpha),
		\end{split}
		\end{align*}
		where
		\begin{equation*}
		-\sum_{n=n_0+1}^{+\infty}\frac{\alpha^n}{n!}C_x(n,\alpha)C_y(n,\alpha)<+\infty,
		\end{equation*}
		as we observed before. Then we can use dominated convergence theorem to take the limit as $\tau_0 \to 0$ and obtain
		\begin{equation*}
		\int_0^{+\infty}\sum_{n=n_0+1}^{+\infty} C_x(n,\alpha)C_y(n,\alpha)e^{-bn\tau}f_t(\tau)d\tau=\sum_{n=n_0+1}^{+\infty} C_x(n,\alpha)C_y(n,\alpha)\int_0^{+\infty}e^{-bn\tau}f_t(\tau)d\tau.
		\end{equation*}
	\end{itemize}
	Hence in general we have for any $x,y \in \mathbb{N}_0$
	\begin{equation*}
	\int_0^{+\infty}\sum_{n=n_0+1}^{+\infty} C_x(n,\alpha)C_y(n,\alpha)e^{-bn\tau}f_t(\tau)d\tau=\sum_{n=n_0+1}^{+\infty} C_x(n,\alpha)C_y(n,\alpha)\int_0^{+\infty}e^{-bn\tau}f_t(\tau)d\tau
	\end{equation*}
	and then
	\begin{align*}
	\begin{split}
	\int_0^{+\infty}p_1(\tau,x;y)f_t(\tau)d\tau&=\int_0^{+\infty}\sum_{n=0}^{+\infty}\frac{\alpha^n}{n!}e^{-bn\tau}C_x(n,\alpha)C_y(n,\alpha)f_t(\tau)d\tau\\&=m(x)\sum_{n=0}^{n_0}\frac{\alpha^n}{n!}C_x(n,\alpha)C_y(n,\alpha)\int_0^{+\infty}e^{-bn\tau}f_t(\tau)d\tau\\&+m(x)\int_0^{+\infty}\sum_{n=n_0+1}^{+\infty}\frac{\alpha^n}{n!}e^{-bn\tau}C_x(n,\alpha)C_y(n,\alpha)f_t(\tau)d\tau\\&=m(x)\sum_{n=0}^{n_0}\frac{\alpha^n}{n!}C_x(n,\alpha)C_y(n,\alpha)\int_0^{+\infty}e^{-bn\tau}f_t(\tau)d\tau\\&+m(x)\sum_{n=n_0+1}^{+\infty}\frac{\alpha^n}{n!}C_x(n,\alpha)C_y(n,\alpha)\int_0^{+\infty}e^{-bn\tau}f_t(\tau)d\tau\\&=m(x)\sum_{n=0}^{+\infty}\frac{\alpha^n}{n!}C_x(n,\alpha)C_y(n,\alpha)\int_0^{+\infty}e^{-bn\tau}f_t(\tau)d\tau\\&=m(x)\sum_{n=0}^{+\infty}\frac{\alpha^n}{n!}C_x(n,\alpha)C_y(n,\alpha)E_\nu(-bnt^\nu).
	\end{split}
	\end{align*}
	Finally we have
	\begin{align*}
	\begin{split}
	\bP(N_\nu(t) \in B|N_\nu(0)=y)&=\int_0^{+\infty}\sum_{x \in B}p_1(\tau,x;y)f_t(\tau)d\tau\\&=\sum_{x \in B}\int_0^{+\infty}p_1(\tau,x;y)f_t(\tau)d\tau\\&=\sum_{x \in B}m(x)\sum_{n=0}^{+\infty}\frac{\alpha^n}{n!}C_x(n,\alpha)C_y(n,\alpha)E_\nu(-bnt^\nu).
	\end{split}
	\end{align*}
	Thus $p_\nu(t,x;y)$ exists and
	\begin{equation*}
	p_\nu(t,x;y)=m(x)\sum_{n=0}^{+\infty}\frac{\alpha^n}{n!}C_x(n,\alpha)C_y(n,\alpha)E_\nu(-bnt^\nu).
	\end{equation*}
\end{proof}
Now we are ready to prove the following Theorem.
\begin{thm}\label{thm:stocrepback}
	Let $g \in c_0$ such that $g(x)=\sum_{n=0}^{+\infty}g_nQ_n(x)$. Then the function
	\begin{equation*}
	u(t;x)=\E[g(N_\nu(t))|N_\nu(0)=x]
	\end{equation*}
	is a solution of \eqref{eq:backprob}.
\end{thm}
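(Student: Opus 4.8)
The plan is to show that the probabilistic object $u(t,x)=\E[g(N_\nu(t))|N_\nu(0)=x]$ coincides with the spectral series $\sum_{n=0}^{+\infty}E_\nu(-bnt^\nu)Q_n(x)g_n$, which by Theorem \ref{thm:strongback} is already known to be a strong solution of \eqref{eq:backprob}. First I would note that $g\in c_0$ is in particular bounded, and since $m$ is a probability mass function,
\begin{equation*}
\Norm{g}{\ell^2(\mm)}^2=\sum_{x=0}^{+\infty}m(x)g^2(x)\le\Norm{g}{\ell^\infty}^2<+\infty,
\end{equation*}
so $g\in\ell^2(\mm)$ and Theorem \ref{thm:strongback} does apply; thus it only remains to identify $u$ with the series in \eqref{eq:strongsol1}.

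For this I would use the defining representation $N_\nu(t)=N_1(L_\nu(t))$ with $N_1$ independent of the inverse subordinator $L_\nu$. Conditioning on $L_\nu(t)$, whose law is $f_t(\tau)\,d\tau$, and using the boundedness of $g$ to make the conditioning legitimate, gives
\begin{equation*}
u(t,x)=\int_0^{+\infty}\E[g(N_1(\tau))|N_1(0)=x]\,f_t(\tau)\,d\tau .
\end{equation*}
The inner expectation equals $\sum_{y=0}^{+\infty}p_1(\tau,y;x)g(y)$, which is precisely the classical (non-fractional) strong solution evaluated at time $\tau$, so by \eqref{eq:stsolnofrac} it equals $\sum_{n=0}^{+\infty}g_n e^{-bn\tau}Q_n(x)$.

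Next I would exchange the series and the integral. Using the identity $\int_0^{+\infty}e^{-bn\tau}f_t(\tau)\,d\tau=E_\nu(-bnt^\nu)\le 1$ recalled in the proof of Theorem \ref{thm:pnu}, together with the Cauchy--Schwarz bound already used in \eqref{eq:432},
\begin{equation*}
\sum_{n=0}^{+\infty}|g_n|\,|Q_n(x)|\int_0^{+\infty}e^{-bn\tau}f_t(\tau)\,d\tau\le\sum_{n=0}^{+\infty}|g_n|\,|Q_n(x)|\le\Norm{g}{\ell^2(\mm)}e^{\frac{\alpha}{2}}d_x<+\infty,
\end{equation*}
so Fubini's theorem applies and yields
\begin{equation*}
u(t,x)=\sum_{n=0}^{+\infty}g_nQ_n(x)\int_0^{+\infty}e^{-bn\tau}f_t(\tau)\,d\tau=\sum_{n=0}^{+\infty}E_\nu(-bnt^\nu)Q_n(x)g_n .
\end{equation*}
This is exactly the function in \eqref{eq:strongsol1}, which Theorem \ref{thm:strongback} asserts is a strong solution of \eqref{eq:backprob} (the initial condition being automatic, since $E_\nu(0)=1$ forces $u(0,x)=\sum_n g_nQ_n(x)=g(x)$), and the proof is complete.

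The main obstacle is the bookkeeping in the interchange of limits: justifying that conditioning on $L_\nu(t)$ is admissible for the semi-Markov process $N_\nu$ (which rests on the independence of $N_1$ and $\sigma_\nu$ and on $g$ being bounded, so that every integral in sight is absolutely convergent) and checking the hypotheses of Fubini's theorem, which reduces to the domination estimate available precisely thanks to the Cauchy--Schwarz bound \eqref{eq:432} and the uniform bound $E_\nu(-bnt^\nu)\le 1$. An alternative route would start directly from Theorem \ref{thm:pnu}, writing $u(t,x)=\sum_{y=0}^{+\infty}p_\nu(t,y;x)g(y)$, inserting the spectral expansion of $p_\nu$, and swapping the two sums using $\langle g,Q_n\rangle_{\ell^2(\mm)}=g_n$; this produces the same final identity but demands an entirely analogous Fubini justification for the resulting double series.
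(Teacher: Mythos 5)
Your proof is correct, but it follows a genuinely different route from the paper's. The paper does not identify $u$ with the spectral series at all: it defines the Feller semigroup $T_tf(x)=\E[f(N_1(t))|N_1(0)=x]$ on $c_0$, invokes Lemma \ref{lem:Fellemma} and a strong-continuity result to verify its hypotheses, and then applies the Baeumer--Meerschaert theorem on stochastic solutions of fractional Cauchy problems, which asserts directly that $\int_0^{+\infty}T_{(t/s)^\nu}g(x)\,g_\nu(s)\,ds$ solves \eqref{eq:backprob}; a change of variables then rewrites this integral as $\E[g(N_\nu(t))|N_\nu(0)=x]$. You instead condition on $L_\nu(t)$, insert the classical spectral solution \eqref{eq:stsolnofrac} for the inner expectation, and use Fubini (justified by the Cauchy--Schwarz/self-duality bound \eqref{eq:432} and $\int_0^{+\infty}e^{-bn\tau}f_t(\tau)\,d\tau=E_\nu(-bnt^\nu)\le 1$) to show that $u$ coincides with the series \eqref{eq:strongsol1}, which Theorem \ref{thm:strongback} has already certified as a strong solution. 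Your argument is more elementary and self-contained relative to the machinery already built in the paper (it is essentially the same interchange carried out in the proof of Theorem \ref{thm:pnu}), and it only uses boundedness of $g$ rather than $g\in c_0$, so it would in fact cover $g\in\ell^\infty$; the paper's approach needs $c_0$ because that is where the Feller semigroup is strongly continuous, but in exchange it transfers the entire analytic burden to a general black-box theorem and would survive in settings where no explicit spectral decomposition is available. Both proofs are sound; yours additionally yields the explicit identification of $\E[g(N_\nu(t))|N_\nu(0)=x]$ with the Mittag-Leffler series as a by-product.
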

\begin{proof}
	First of all observe that
	\begin{equation*}
	\sum_{x=0}^{+\infty}m(x)g^2(x)\le \Norm{g}{\ell^\infty}^2,
	\end{equation*}
	hence $g \in \ell^2(\mm)$ and we are under the hypotheses of Theorem \ref{thm:strongback}.\\
	Consider a generic $f \in c_0$ and define the family of operators
	\begin{equation*}
	T_tf(x)=\E[f(N_1(t))|N_1(0)=x]=\sum_{y \in \mathbb{N}_0}p_1(t,y;x)f(y).
	\end{equation*}
	In particular, by Lemma \ref{lem:Fellemma}, we know that $N_1(t)$ is a Feller process, hence $(T_t)_{t \ge 0}$ is a Feller semigroup. Moreover, strong continuity of $(T_t)_{t \ge 0}$ follows from \cite[Lemma $1.4$]{Bott2013}. Then, by using \cite[Theorem $3.1$]{BaeumerMeerschaert}, we know that, since $\cG$ is the generator of $T_t$, the function
	\begin{equation*}
	u(t;x):=\int_{0}^{+\infty}T_{\left(\frac{t}{s}\right)^\nu}g(x)g_\nu(s)ds,
	\end{equation*}
	where $g_\nu(s)$ is the density of $\sigma_\nu(1)$, is a solution of \eqref{eq:backprob}. But if we use the change of variables $\tau=\left(\frac{t}{s}\right)^\nu$, and the fact that $f_t(\tau)=\frac{t}{\nu}\tau^{-1-\frac{1}{\nu}}g_\nu(t\tau^{-\frac{1}{\nu}})$ for $\tau \ge 0$ (see, for instance, \cite{MeerschaertStraka2013}), we obtain
	\begin{align*}
	\begin{split}
	u(t;x)&=\frac{t}{\nu}\int_0^{\infty}T_\tau g(x)\tau^{-1-\frac{1}{\nu}}g_\nu(t\tau^{-\frac{1}{\nu}})d\tau\\&=\int_0^{+\infty}T_\tau g(x)f_t(\tau)d\tau\\&=\int_0^{+\infty}\E[g(N_1(\tau))|N_1(0)=x]f_t(\tau)d\tau\\&=\E[g(N_1(L_\nu(\tau)))|N_1(0)=x]=\E[g(N_\nu(t))|N_\nu(0)=x], \ t\ge 0, \ x \in \mathbb{N}.
	\end{split}
	\end{align*}
\end{proof}
Finally, we can provide the stochastic representation of solutions of \eqref{eq:forprob}.
\begin{cor}\label{cor:stocrap}
	Let $p_\nu(t,x;y)$ be the transition density of $N_\nu(t)$. Then, for any $f$ such that $\frac{f}{m}\in \ell^2(\mm)$ with decomposition $f(x)/m(x)=\sum_{n=0}^{+\infty}f_nQ_n(x)$. Thus
	\begin{equation*}
	u(t,x)=\sum_{y \in \mathbb{N}_0}p_\nu(t,x;y)f(y)
	\end{equation*}
	is a solution of \eqref{eq:forprob}.
\end{cor}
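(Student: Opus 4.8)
\textit{Proof proposal.} The plan is to deduce the Corollary from Theorem~\ref{thm:strongfor}: that theorem already exhibits
\begin{equation*}
v(t,x):=m(x)\sum_{n=0}^{+\infty}E_\nu(-bnt^\nu)Q_n(x)f_n
\end{equation*}
as a strong solution of \eqref{eq:forprob}, so it is enough to check that $u(t,x)=\sum_{y\in\mathbb{N}_0}p_\nu(t,x;y)f(y)$ coincides with $v(t,x)$. At $t=0$ this is immediate, since $p_\nu(0,x;y)=\delta_{x,y}$ while $v(0,x)=m(x)\sum_n f_nQ_n(x)=f(x)$; so fix $t>0$. The idea is then to plug in the subordination identity $p_\nu(t,x;y)=\int_0^{+\infty}p_1(\tau,x;y)f_t(\tau)\,d\tau$ proved inside Theorem~\ref{thm:pnu}, exchange $\sum_y$ with $\int_0^{+\infty}d\tau$, recognise the non-fractional forward solution $\sum_y p_1(\tau,x;y)f(y)=m(x)\sum_n f_ne^{-bn\tau}Q_n(x)$ (recalled in Section~\ref{sec3}) inside the integral, and finally exchange $\int_0^{+\infty}d\tau$ with $\sum_n$, using $\int_0^{+\infty}e^{-bn\tau}f_t(\tau)\,d\tau=E_\nu(-bnt^\nu)$.

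The only genuinely non-routine step is the integrability bound needed for the first exchange, namely $\sum_{y\in\mathbb{N}_0}|f(y)|\,p_\nu(t,x;y)<+\infty$. The naive estimate $|Q_n(y)|\le e^{\alpha/2}d_y$ is of no use here, because it only gives $\sum_y|f(y)|p_\nu(t,x;y)\le e^\alpha d_x\sum_y|f(y)|d_y$ and $\{f(y)d_y\}_y$ belongs to $\ell^2$ but not to $\ell^1$. Instead I would use the probabilistic structure. The symmetric spectral form of $p_\nu$ from Theorem~\ref{thm:pnu} (or the reversibility of $N_1$ transported through the time change) gives $m(y)p_\nu(t,x;y)=m(x)p_\nu(t,y;x)$; also $0\le p_\nu(t,x;y)\le 1$ and $\sum_yp_\nu(t,y;x)=1$, the latter because $N_1$ is a Feller, hence conservative, process by Lemma~\ref{lem:Fellemma}. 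Then Cauchy--Schwartz in $\ell^2(\mm)$ yields
\begin{equation*}
\sum_{y}|f(y)|\,p_\nu(t,x;y)=\langle |f|/m,\ p_\nu(t,x;\cdot)\rangle_{\ell^2(\mm)}\le\Norm{f/m}{\ell^2(\mm)}\Big(\sum_ym(y)p_\nu(t,x;y)^2\Big)^{1/2},
\end{equation*}
and since $p_\nu(t,x;y)^2\le p_\nu(t,x;y)$ and $m(y)p_\nu(t,x;y)=m(x)p_\nu(t,y;x)$ one has $\sum_y m(y)p_\nu(t,x;y)^2\le\sum_y m(y)p_\nu(t,x;y)=m(x)\sum_yp_\nu(t,y;x)=m(x)$. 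The very same computation with $p_1$ in place of $p_\nu$ shows $\sum_y|f(y)|p_1(\tau,x;y)<+\infty$ for every $\tau>0$, so the inner series is absolutely convergent and equals $m(x)\sum_n f_ne^{-bn\tau}Q_n(x)$.

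With this bound the two exchanges are routine. Since $p_1,f_t\ge 0$, Tonelli's theorem together with $\int_0^{+\infty}p_1(\tau,x;y)f_t(\tau)\,d\tau=p_\nu(t,x;y)$ gives $\sum_y\int_0^{+\infty}|f(y)|p_1(\tau,x;y)f_t(\tau)\,d\tau=\sum_y|f(y)|p_\nu(t,x;y)<+\infty$, whence
\begin{equation*}
u(t,x)=\int_0^{+\infty}\Big(\sum_yf(y)p_1(\tau,x;y)\Big)f_t(\tau)\,d\tau=m(x)\int_0^{+\infty}\Big(\sum_{n=0}^{+\infty}f_ne^{-bn\tau}Q_n(x)\Big)f_t(\tau)\,d\tau .
\end{equation*}
For the second exchange, the Cauchy--Schwartz bound already used in \eqref{eq:432} gives $\sum_n|f_n||Q_n(x)|\int_0^{+\infty}e^{-bn\tau}f_t(\tau)\,d\tau\le\sum_n|f_nQ_n(x)|\le\Norm{f/m}{\ell^2(\mm)}e^{\alpha/2}d_x<+\infty$, so Fubini applies and, using $\int_0^{+\infty}e^{-bn\tau}f_t(\tau)\,d\tau=E_\nu(-bnt^\nu)$, we obtain $u(t,x)=m(x)\sum_nf_nQ_n(x)E_\nu(-bnt^\nu)=v(t,x)$. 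Hence $u\equiv v$, which by Theorem~\ref{thm:strongfor} is a strong solution of \eqref{eq:forprob}. The main obstacle is the integrability bound in the second paragraph; the rest is bookkeeping with Fubini and Tonelli.
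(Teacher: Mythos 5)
Your proof is correct and follows the same route as the paper, which simply asserts that the corollary ``easily follows from Theorems \ref{thm:pnu} and \ref{thm:strongfor}''; your contribution is to actually justify the identity $\sum_y p_\nu(t,x;y)f(y)=m(x)\sum_n E_\nu(-bnt^\nu)f_nQ_n(x)$, which Theorem \ref{thm:strongfor} writes down without proof. The integrability bound you obtain via the symmetry $m(y)p_\nu(t,x;y)=m(x)p_\nu(t,y;x)$ and Cauchy--Schwartz in $\ell^2(\mm)$ is sound (it is essentially the estimate the paper itself uses later in Proposition \ref{prop:uniq}), and the two Fubini/Tonelli exchanges are correctly justified.
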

\begin{proof}
	This easily follows from Theorems \ref{thm:pnu} and \ref{thm:strongfor}.
\end{proof} 
We can use the last Corollary to exploit the asymptotic behaviour of the density of the process $N_\nu(t)$.
\section{Uniqueness of strong solutions}\label{sec6}
In this section, we aim to show that the strong solutions of \eqref{eq:backprob} and \eqref{eq:forprob} are unique under some hypotheses.
\begin{prop}
	The function $p_\nu(t,x;y)$ given in \eqref{eq:fundsol} is the unique global solution of \eqref{eq:fundback} (for fixed $x$) and \eqref{eq:fundfor} (for fixed $y$).
\end{prop}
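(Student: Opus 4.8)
The strategy is to invoke the uniqueness criterion for uniformly bounded solutions of fractional Cauchy problems of \cite{AscioneLeonenkoPirozzi2018}, after checking that $p_\nu$ satisfies its hypotheses and recalling that, by Theorems~\ref{thm:strongback}, \ref{thm:strongfor} and \ref{thm:pnu}, it is a solution of both \eqref{eq:fundback} and \eqref{eq:fundfor}. The decisive a priori bound is the probabilistic one of Theorem~\ref{thm:pnu}: $p_\nu(t,x;y)=\bP(N_\nu(t)=x\mid N_\nu(0)=y)\in[0,1]$, so both $y\mapsto p_\nu(t,x;y)$ (fixed $x$) and $x\mapsto p_\nu(t,x;y)$ (fixed $y$) are uniformly bounded, uniformly in $t\ge0$. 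Moreover, arguing as in Lemma~\ref{lem:fundsol} with the orthogonality relation \eqref{eq:seriesconv} and the self-duality of the Charlier polynomials, one obtains the $\ell^2(\mm)$-bounds $\Norm{p_\nu(t,x;\cdot)}{\ell^2(\mm)}^2\le m(x)$ and $\Norm{p_\nu(t,\cdot;y)/m}{\ell^2(\mm)}^2\le e^{\alpha}\,y!/\alpha^{y}$, again uniform in $t$; thus $p_\nu$ lies in the relevant solution class irrespective of whether ``uniformly bounded'' in the cited criterion is meant with respect to the sup-norm or the $\ell^2(\mm)$-norm.

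For \eqref{eq:fundback} (fixed $x$, with $\cG$ acting on $y$) I would argue directly. The initial datum $y\mapsto\delta_{x,y}$ has finite support, hence lies in $c_0\subset\ell^2(\mm)$ and in the domain of $\cG$ (which is all of $\ell^2(\mm)$ by Lemma~\ref{lem:contlemma}), while $\cG$ generates the Feller semigroup of $N_1$ by Lemma~\ref{lem:Fellemma}. By Theorem~\ref{thm:strongback} (equivalently, from the representation $p_\nu(t,x;y)=\bP(N_\nu(t)=x\mid N_\nu(0)=y)$ via Theorem~\ref{thm:stocrepback} applied to $g=\delta_{x,\cdot}\in c_0$) the function $p_\nu(t,x;\cdot)$ is a global, uniformly bounded strong solution, and the criterion of \cite{AscioneLeonenkoPirozzi2018} forces any two such solutions sharing the same datum to coincide. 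The mechanism, which I would recall rather than reprove, is a Laplace transform in $t$: since the Laplace transform of $\frac{\partial^\nu u}{\partial t^\nu}$ equals $s^\nu\widetilde u(s,\cdot)-s^{\nu-1}u(0,\cdot)$, the difference of two solutions with the same datum transforms into an element of $\ker(s^\nu I-\cG)$; but for $\operatorname{Re}s>0$ one has $\operatorname{Re}s^\nu>0$, which lies outside the spectrum $\{-bn:n\in\mathbb{N}_0\}\subset(-\infty,0]$ of $\cG$, so this kernel is trivial, and injectivity of the Laplace transform (using continuity of $t\mapsto p_\nu(t,x;y)$) yields equality of the solutions.

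For \eqref{eq:fundfor} (fixed $y$, with $\cL$ acting on $x$) I would reduce to the backward case through the identity $\cL(m(\cdot)h(\cdot))(x)=m(x)\,\cG h(x)$, established in the proof of Theorem~\ref{thm:strongfor} from the discrete Pearson equation \eqref{eq:discPear} and valid for an arbitrary $h:\mathbb{N}_0\to\R$ with $h(-1)=0$, not only for $h=Q_n$. Given any global strong solution $v$ of \eqref{eq:fundfor} with $v(t,\cdot)/m$ bounded in $\ell^2(\mm)$ uniformly in $t$, the function $w:=v/m$ is a global strong solution of $\frac{\partial^\nu w}{\partial t^\nu}=\cG w$ with initial datum $\delta_{\cdot,y}/m(y)$ lying in the class to which the backward uniqueness applies; hence $w$, and so $v=m\,w$, is unique, and it must coincide with $p_\nu(t,\cdot;y)$ by Corollary~\ref{cor:stocrap}. (If the cited criterion is stated directly for forward/master equations, one instead applies it to $p_\nu(t,\cdot;y)$, which is bounded by $1$.)

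The step I expect to be the real obstacle is not any computation but the matching of hypotheses in \cite{AscioneLeonenkoPirozzi2018}: one must verify that $p_\nu(t,x;\cdot)$, respectively $p_\nu(t,\cdot;y)/m$, genuinely belongs to the function class in which that criterion guarantees uniqueness — in particular membership in $c_0$ or in $\ell^2(\mm)$ as required, continuity of $t\mapsto p_\nu(t,x;y)$ up to $t=0$ so that the initial condition is attained in the correct topology and the Laplace transforms are legitimate, and that the initial data $\delta_{x,\cdot}$ and $\delta_{\cdot,y}/m$ meet the ``suitable assumptions on the initial data'' of that criterion. All of this is available from Lemmas~\ref{lem:contlemma}, \ref{lem:Fellemma} and \ref{lem:fundsol} and the explicit Mittag-Leffler series \eqref{eq:fundsol}, but it should be written out carefully rather than taken for granted.
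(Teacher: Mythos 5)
Your proposal is correct and follows essentially the same route as the paper: both reduce the statement to the uniqueness criterion for uniformly bounded solutions in \cite{AscioneLeonenkoPirozzi2018} (Corollary $2$ there), using the probabilistic bound $0\le p_\nu(t,x;y)\le 1$ from Theorem \ref{thm:pnu} together with the continuity of $\cG$ and $\cL$ on $\ell^2(\mm)$ from Lemma \ref{lem:contlemma}. The paper dispenses with your Laplace-transform sketch and with the $v/m$ reduction of the forward equation: it simply notes that $\Norm{p_\nu(t,\cdot;y)}{\ell^1}=\Norm{p_\nu(t,x;\cdot)}{\ell^1}=1$, invokes the embedding $\ell^1\hookrightarrow\ell^2(\mm)$ of Remark \ref{rmk:emb} to obtain the uniform $\ell^2(\mm)$ bound, and applies the criterion directly to both $\cG$ and $\cL$ --- the route you already mention in your final parenthetical.
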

\begin{proof}
	Let us first notice that, by Theorem \ref{thm:pnu} we know that $0\le p_\nu(t,x;y)\le 1$ and
	\begin{equation*}
	\sum_{x=0}^{+\infty}p_\nu(t,x;y)=\sum_{y=0}^{+\infty}p_\nu(t,x;y)=1,
	\end{equation*}
	hence $\Norm{p_\nu(t,\cdot;y)}{\ell^1}=\Norm{p_\nu(t,x;\cdot)}{\ell^1}=1$. Thus $p_\nu(t,x;y)$ is bounded in $\ell^1$ uniformly with respect to $t\ge 0$ for fixed $x$ or $y$. Since $\ell^1$ is continuously embedded in $\ell^2(\mm)$ (see Remark \ref{rmk:emb}), then $p_\nu(t,x;y)$ is uniformly bounded also in $\ell^2(\mm)$. Moreover, we have shown in Lemma \ref{lem:contlemma} that the operators $\cG$ and $\cL$ are continuous. Hence by \cite[Corollary $2$]{AscioneLeonenkoPirozzi2018} we can conclude that $p_\nu(t,x;y)$ is the unique global solution of \eqref{eq:fundback} and \eqref{eq:fundfor}.
\end{proof}
%However, we can find a stronger regularity result for $p_\nu(t,x;y)$.
%\begin{lem}\label{lem:regp}
%The functions $x \mapsto p_\nu(t,x;y)$ are bounded in $l^1$ (and thus in $\ell^2$ and $\ell^2(\mm)$) uniformly with respect to $t\ge 0$ and $y \in \N_0$. Moreover, the functions $x \mapsto \frac{p_\nu(t,x;y)}{m(x)}$ are bounded in $\ell^2(\mm)$ uniformly with respect to $t\ge 0$ and $y \in \N_0$.
%\end{lem}
%\begin{proof}
%The first bound follows easily from the fact that $\sum_{x=0}^{+\infty}p_\nu(t,x;y)=1$ independently from $t$ and $y$.\\
%Let us show the second bound. To do that, let us observe that
%\begin{equation}
%\Norm{\frac{p_\nu(t,\cdot;y)}{m(\cdot)}}{\ell^2(\mm)}=\sum_{x=0}^{+\infty}m(x)\left(\frac{p_\nu(t,x;y)}{m(x)}\right)
%\end{proof}
Now let us show the uniqueness of the solutions of the backward equation \eqref{eq:backprob}.
\begin{prop}
	Let $g \in \ell^\infty$ such that $g(x)=\sum_{n=0}^{+\infty} g_nQ_n(x)$. Then the strong solution $u(t,x)$ of \eqref{eq:backprob} is in $\ell^\infty$, hence also in $\ell^2(\mm)$, for any $t \ge 0$ and it is the unique global solution in $\ell^2(\mm)$.
\end{prop}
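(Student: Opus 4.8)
The plan is to first establish that the strong solution $u$ furnished by Theorem \ref{thm:strongback} is bounded, then deduce from this that it is uniformly bounded in $\ell^2(\mm)$ on $[0,+\infty)$, and finally close by invoking the uniqueness criterion \cite[Corollary $2$]{AscioneLeonenkoPirozzi2018} exactly as in the preceding Proposition. Since $\mm$ is a probability measure, $g\in\ell^\infty$ forces $\sum_{x=0}^{+\infty}m(x)g^2(x)\le\Norm{g}{\ell^\infty}^2$, so $g\in\ell^2(\mm)$ and Theorem \ref{thm:strongback} applies, giving
\begin{equation*}
u(t,x)=\sum_{y=0}^{+\infty}p_\nu(t,y;x)g(y)=\sum_{n=0}^{+\infty}E_\nu(-bnt^\nu)Q_n(x)g_n .
\end{equation*}
By Theorem \ref{thm:pnu} (and as recalled in the preceding Proposition) one has $p_\nu(t,\cdot;x)\ge 0$ and $\sum_{y=0}^{+\infty}p_\nu(t,y;x)=1$ for every $x\in\mathbb{N}_0$ and every $t\ge 0$, whence
\begin{equation*}
|u(t,x)|\le\sum_{y=0}^{+\infty}p_\nu(t,y;x)|g(y)|\le\Norm{g}{\ell^\infty}\sum_{y=0}^{+\infty}p_\nu(t,y;x)=\Norm{g}{\ell^\infty},
\end{equation*}
so that $u(t,\cdot)\in\ell^\infty$ with $\Norm{u(t,\cdot)}{\ell^\infty}\le\Norm{g}{\ell^\infty}$ for every $t\ge 0$.

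The passage to $\ell^2(\mm)$ is then immediate, again because $\mm$ is a probability measure:
\begin{equation*}
\Norm{u(t,\cdot)}{\ell^2(\mm)}^2=\sum_{x=0}^{+\infty}m(x)u^2(t,x)\le\Norm{u(t,\cdot)}{\ell^\infty}^2\le\Norm{g}{\ell^\infty}^2 ,
\end{equation*}
so $u$ is a global strong solution of \eqref{eq:backprob} that is uniformly bounded in $\ell^2(\mm)$. Since $\cG:\ell^2(\mm)\to\ell^2(\mm)$ is continuous by Lemma \ref{lem:contlemma}, \cite[Corollary $2$]{AscioneLeonenkoPirozzi2018} applies verbatim as in the preceding Proposition and yields that $u$ is the unique global solution of \eqref{eq:backprob} in $\ell^2(\mm)$.

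The only genuinely delicate point is the first equality in the displayed representation of $u$ — namely that the $n$-series produced by Theorem \ref{thm:strongback} coincides with $\sum_y p_\nu(t,y;x)g(y)$ when $g$ is merely bounded — since substituting $p_\nu(t,y;x)=m(y)\sum_n E_\nu(-bnt^\nu)Q_n(y)Q_n(x)$ and interchanging the summations requires isolating the finitely many indices below the last real zeros of $C_x(\cdot,\alpha)$ and $C_y(\cdot,\alpha)$ and arguing by signs, in the spirit of the proof of Theorem \ref{thm:pnu}. To keep the argument self-contained one may instead proceed by truncation: for the finitely supported $g_k(x):=g(x)\mathbf{1}_{\{x\le k\}}\in c_0$, Theorem \ref{thm:stocrepback} together with an elementary finite-sum interchange identifies $u_k(t,x)=\E[g_k(N_\nu(t))\mid N_\nu(0)=x]$ with $\sum_{n=0}^{+\infty}E_\nu(-bnt^\nu)Q_n(x)(g_k)_n$, so $|u_k(t,x)|\le\Norm{g_k}{\ell^\infty}\le\Norm{g}{\ell^\infty}$; since $\Norm{g_k-g}{\ell^2(\mm)}\to 0$, Cauchy--Schwarz and $\sum_n Q_n^2(x)=e^{\alpha}d_x^2<+\infty$ give $u_k(t,x)\to u(t,x)$ for each fixed $t,x$, and the bound $|u(t,x)|\le\Norm{g}{\ell^\infty}$ passes to the limit.
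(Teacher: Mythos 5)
Your proof is correct and follows essentially the same route as the paper's: bound $u(t,\cdot)$ in $\ell^\infty$ using that $p_\nu(t,\cdot;x)$ is a probability mass (the paper applies Jensen's inequality where you use the triangle inequality, to the same effect), deduce the uniform bound in $\ell^2(\mm)$, and invoke \cite[Corollary $2$]{AscioneLeonenkoPirozzi2018} together with the continuity of $\cG$ from Lemma \ref{lem:contlemma}. Your additional truncation argument justifying the coincidence of the two series representations of $u$ addresses a point the paper simply takes as already established in the statement of Theorem \ref{thm:strongback}.
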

\begin{proof}
	First of all, let us observe that if $g \in \ell^\infty$, then $g \in \ell^2(\mm)$ too, so we are under the hypotheses of Theorem \ref{thm:strongback}. Moreover we have, by using Theorem \ref{thm:pnu} and Jensen inequality:
	\begin{align*}
	\begin{split}
	u^2(t,x)&=\left(\sum_{y=0}^{+\infty}p_\nu(t,y;x)g(y)\right)^2 \\&\le\sum_{y=0}^{+\infty}p_\nu(t,y;x)g^2(y)\le \Norm{g}{\ell^\infty}^2,
	\end{split}
	\end{align*}
	and then
	\begin{equation*}
	\sum_{x=0}^{+\infty}m(x)u^2(t,x)\le \Norm{g}{\ell^\infty}^2,
	\end{equation*}
	obtaining the uniform bound for $x \mapsto u(t,x)$. Hence $u(t,\cdot) \in \ell^2(\mm)$ for any $t \ge 0$. Moreover, since $\cG$ is a continuous operator, by \cite[Corollary $2$]{AscioneLeonenkoPirozzi2018}, it is the unique global solution of \eqref{eq:backprob}.
\end{proof}
We can also obtain the uniqueness of solutions of \eqref{eq:forprob}.
\begin{prop}\label{prop:uniq}
	Let $f:\N_0 \to \R$ be a function such that $\frac{f}{m} \in \ell^2(\mm)$ and $\frac{f(x)}{m(x)}=\sum_{n=0}^{+\infty}f_nQ_n(x)$. Then the strong solution $u(t,x)$ of \eqref{eq:forprob} is in $\ell^\infty$, hence in $\ell^2(\mm)$, and it is the unique global solution in $\ell^2(\mm)$.
\end{prop}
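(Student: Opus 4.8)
The plan is to mirror the two preceding propositions; the only new point is that, since $f$ need not be bounded, the forward solution has to be controlled in $\ell^1$ rather than in $\ell^\infty$. First I would note that $f/m\in\ell^2(\mm)$ places us under the hypotheses of Theorem \ref{thm:strongfor} (and of Corollary \ref{cor:stocrap}), so the strong solution exists and equals $u(t,x)=\sum_{y=0}^{+\infty}p_\nu(t,x;y)f(y)$; moreover, since $\mm$ is a probability measure, Cauchy--Schwarz in $\ell^2(\mm)$ gives
\[
\Norm{f}{\ell^1}=\sum_{x=0}^{+\infty}\left|\frac{f(x)}{m(x)}\right|m(x)\le\Norm{f/m}{\ell^2(\mm)}<+\infty,
\]
so in fact $f\in\ell^1$.

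Next I would estimate $u(t,\cdot)$ in $\ell^1$, uniformly in $t\ge0$. Since $p_\nu(t,x;y)\ge0$ and, by Theorem \ref{thm:pnu}, $p_\nu$ is the transition probability mass of $N_\nu(t)$, so that $\sum_{x=0}^{+\infty}p_\nu(t,x;y)=1$, Tonelli's theorem yields
\[
\Norm{u(t,\cdot)}{\ell^1}\le\sum_{y=0}^{+\infty}|f(y)|\sum_{x=0}^{+\infty}p_\nu(t,x;y)=\Norm{f}{\ell^1}.
\]
Because $\ell^1$ embeds continuously in $\ell^\infty$ (trivially, $\Norm{\cdot}{\ell^\infty}\le\Norm{\cdot}{\ell^1}$) and, by Remark \ref{rmk:emb}, in $\ell^2(\mm)$, it follows that $u(t,\cdot)\in\ell^\infty\subset\ell^2(\mm)$ for every $t\ge0$ and that $\sup_{t\ge0}\Norm{u(t,\cdot)}{\ell^2(\mm)}<+\infty$.

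Having a globally (in $t$) $\ell^2(\mm)$-bounded strong solution of \eqref{eq:forprob}, and knowing from Lemma \ref{lem:contlemma} that $\cL\colon\ell^2(\mm)\to\ell^2(\mm)$ is continuous, I would then invoke the uniqueness criterion \cite[Corollary $2$]{AscioneLeonenkoPirozzi2018}, exactly as in the preceding propositions, to conclude that $u$ is the unique global solution of \eqref{eq:forprob} in $\ell^2(\mm)$. The only real point of attention is the first step: the Jensen argument used for the backward equation needs a bounded datum, so here one has to observe instead that $f/m\in\ell^2(\mm)$ is strong enough to force $f\in\ell^1$ and that the forward flow contracts $\ell^1$; alternatively, one could run essentially the same bound using the reversibility relation $m(y)\,p_\nu(t,x;y)=m(x)\,p_\nu(t,y;x)$ together with Jensen applied to $f/m$.
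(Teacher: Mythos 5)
Your argument is correct, but it reaches the key uniform bound by a different route than the paper. The paper rewrites $u(t,x)=\sum_{y}m(y)p_\nu(t,x;y)\frac{f(y)}{m(y)}$, applies Jensen's inequality with respect to the probability measure $\mm$, and then uses $p_\nu(t,x;y)\le 1$ to get the pointwise bound $u^2(t,x)\le \Norm{f/m}{\ell^2(\mm)}^2$, from which both the $\ell^\infty$ membership and the uniform $\ell^2(\mm)$ bound follow at once; this is essentially the ``alternative'' you sketch in your last sentence, except that no reversibility identity is needed. You instead observe that $f/m\in\ell^2(\mm)$ forces $f\in\ell^1$ (by Cauchy--Schwarz against the constant function, using $\sum_x m(x)=1$) and that the forward kernel is an $\ell^1$-contraction because $\sum_x p_\nu(t,x;y)=1$; combined with the trivial embedding $\ell^1\subset\ell^\infty$ and the embedding $\ell^1\subset\ell^2(\mm)$ of Remark \ref{rmk:emb}, this gives $\sup_{t\ge 0}\Norm{u(t,\cdot)}{\ell^2(\mm)}<+\infty$, which is all the uniqueness criterion of \cite[Corollary $2$]{AscioneLeonenkoPirozzi2018} requires. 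Your route actually yields the slightly stronger conclusion $u(t,\cdot)\in\ell^1$ with $\Norm{u(t,\cdot)}{\ell^1}\le\Norm{f}{\ell^1}$ (natural for a forward equation, where the relevant conservation is of total mass), while the paper's Jensen computation is more self-contained in $\ell^2(\mm)$ and mirrors the backward case more closely. Both are complete proofs of the stated proposition.
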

\begin{proof}
	%First let us observe that since $\Norm{p_\nu(t,\cdot;y)}{l^1}=\Norm{p_\nu(t,x;\cdot)}{l^1}=1$, then there exists a constant $C$ independent from $t \ge 0$ and $x,y \in \mathbb{N}_0$ such that $$\max\{\Norm{p_\nu(t,\cdot;y)}{\ell^2},\Norm{p_\nu(t,x;\cdot)}{\ell^2}\}\le C.$$
	Let us observe that
	\begin{equation*}
	u^2(t,x)=\left(\sum_{y=0}^{+\infty}p_\nu(t,x;y)f(y)\right)^2=\left(\sum_{y=0}^{+\infty}m(y)p_\nu(t,x;y)\frac{f(y)}{m(y)}\right)^2.
	\end{equation*}
	Thus, by Jensen inequality, we have
	\begin{equation*}
	u^2(t,x)\le \sum_{y=0}^{+\infty}m(y)p_\nu^2(t,x;y)\frac{f^2(y)}{m^2(y)}.
	\end{equation*}
	Now, from Theorem \ref{thm:pnu}, we know that $p_\nu(t,x;y)\le 1$, then
	\begin{equation*}
	u^2(t,x)\le \sum_{y=0}^{+\infty}m(y)\frac{f^2(y)}{m^2(y)}=\Norm{f/m}{\ell^2(\mm)}.
	\end{equation*}
	Finally, we have
	\begin{equation*}
	\sum_{x=0}^{+\infty}m(x)u^2(t,x)\le \Norm{f/m}{\ell^2(\mm)}\sum_{x=0}^{+\infty}m(x)=\Norm{f/m}{\ell^2(\mm)}
	\end{equation*}
	thus, since $\cL$ is a continuous operator, from \cite[Corollary $2$]{AscioneLeonenkoPirozzi2018} we have that $u(t,x)$ is the unique global solution of \eqref{eq:forprob}.
\end{proof}
\begin{rmk}
	The condition $f/m \in \ell^2(\mm)$ is stronger than $f \in \ell^2$ for any probability measure $\mm$ on $\mathbb{N}_0$. Indeed we can show that the following two properties
	\begin{itemize}
		\item[$a)$] $f \in \ell^2$;
		\item[$b)$] $f/\sqrt{m} \in \ell^2(\mm)$;
	\end{itemize}
	are equivalent: this can be done simply observing that
	\begin{equation*}
	\sum_{x=0}^{+\infty}f^2(x)=\sum_{x=0}^{+\infty}m(x)\left(\frac{f(x)}{\sqrt{m(x)}}\right)^2.
	\end{equation*}
	Moreover, if we consider the property
	\begin{itemize}
		\item[$c)$] $f/m \in \ell^2(\mm)$;
	\end{itemize}
	we can see that $c) \Rightarrow a)$. Indeed we have, since $m(x)\le 1$
	\begin{equation*}
	\sum_{x=0}^{+\infty}f^2(x)=\sum_{x=0}^{+\infty}m^2(x)\left(\frac{f(x)}{m(x)}\right)^2\le\sum_{x=0}^{+\infty}m(x)\left(\frac{f(x)}{m(x)}\right)^2.
	\end{equation*}
	However, if we consider $f(x)=\sqrt{m(x)}$, it is easy to verify that $f \in \ell^2$ but $f/m \not \in \ell^2(\mm)$.
\end{rmk}
\section{Limit distribution of $N_\nu(t)$}\label{sec7}
In this section we want to give some results on the limit distribution of $N_\nu(t)$. In particular we have
\begin{thm}\label{thm:statdist}
	Let $p_\nu(t,x;y)$ be the transition probability mass of $N_\nu(t)$. Then, given any initial probability mass $f$ such that $f/m \in \ell^2(\mm)$ and $f(x)/m(x)=\sum_{n \in \mathbb{N}_0}f_nQ_n(x)$, the probability mass of $N_\nu(t)$ asymptotically converges towards a Poisson measure, that is to say if $p_\nu(t,x)=\sum_{y \in \mathbb{N}_0}p_\nu(t,x;y)f(y)$, then
	\begin{equation*}
	\lim_{t \to +\infty}p_\nu(t,x)=m(x).
	\end{equation*}
\end{thm}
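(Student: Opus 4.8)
The plan is to start from the spectral representation of the mass $p_\nu(t,x)$. By Corollary \ref{cor:stocrap} the function $p_\nu(t,x)=\sum_{y\in\mathbb{N}_0}p_\nu(t,x;y)f(y)$ is the strong solution of \eqref{eq:forprob} with initial datum $f$, and by Theorem \ref{thm:strongfor} this strong solution has the explicit form
\[
p_\nu(t,x)=m(x)\sum_{n=0}^{+\infty}E_\nu(-bnt^\nu)Q_n(x)f_n .
\]
(Equivalently, one can obtain this directly by inserting the series \eqref{eq:fundsol} for $p_\nu(t,x;y)$ provided by Theorem \ref{thm:pnu} and using $\sum_{y}Q_n(y)f(y)=\langle Q_n,f/m\rangle_{\ell^2(\mm)}=f_n$.) The idea is then simply to pass to the limit $t\to+\infty$ inside this series, term by term.

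First I would record the two facts needed for the limit. On one hand $Q_0\equiv 1$ and $E_\nu(0)=1$, so the $n=0$ term is constantly $m(x)f_0$. On the other hand, for each fixed $n\ge 1$ the Mittag-Leffler estimate used in the proof of Lemma \ref{lem:teclemma} gives $0\le E_\nu(-bnt^\nu)\le\bigl(1+bnt^\nu/\Gamma(1+\nu)\bigr)^{-1}$, so $E_\nu(-bnt^\nu)\to 0$ as $t\to+\infty$. Next I would justify the interchange of limit and sum: since $E_\nu(-bnt^\nu)\le 1$ for all $n$ and $t$, every summand is dominated in absolute value by $|Q_n(x)f_n|$, and the Cauchy–Schwarz bound already obtained in \eqref{eq:432} (with $f/m$ in place of $g$, using the self-duality of the Charlier polynomials) shows $\sum_{n=0}^{+\infty}|Q_n(x)f_n|\le\Norm{f/m}{\ell^2(\mm)}e^{\alpha/2}d_x<+\infty$ for each fixed $x$. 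Hence dominated convergence with respect to the counting measure on $\mathbb{N}_0$ applies, yielding
\[
\lim_{t\to+\infty}p_\nu(t,x)=m(x)\sum_{n=0}^{+\infty}\Bigl(\lim_{t\to+\infty}E_\nu(-bnt^\nu)\Bigr)Q_n(x)f_n=m(x)Q_0(x)f_0=m(x)f_0 .
\]

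Finally I would identify the coefficient $f_0$. By definition, $f_0=\langle f/m,Q_0\rangle_{\ell^2(\mm)}=\sum_{x=0}^{+\infty}m(x)\frac{f(x)}{m(x)}=\sum_{x=0}^{+\infty}f(x)=1$, because $f$ is a probability mass. Therefore $\lim_{t\to+\infty}p_\nu(t,x)=m(x)$, which is exactly the assertion. I do not anticipate any real obstacle: the only delicate point is the exchange of limit and infinite sum, and it is disposed of precisely by the summability bound \eqref{eq:432} that was established earlier, together with the elementary decay $E_\nu(-bnt^\nu)\to 0$ coming from the estimate already invoked in Lemma \ref{lem:teclemma}.
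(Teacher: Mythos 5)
Your proposal is correct and follows essentially the same route as the paper: both use the spectral series $p_\nu(t,x)=m(x)\sum_{n}E_\nu(-bnt^\nu)f_nQ_n(x)$ from Theorem \ref{thm:strongfor}, isolate the $n=0$ term with $Q_0\equiv 1$ and $f_0=\sum_x f(x)=1$, and justify passing to the limit termwise via the Cauchy--Schwarz/self-duality bound $\sum_n|f_nQ_n(x)|\le e^{\alpha/2}\Norm{f/m}{\ell^2(\mm)}d_x$ together with $E_\nu(-bnt^\nu)\to 0$ for $n\ge 1$. The only cosmetic difference is that you invoke dominated convergence on the counting measure where the paper appeals to total convergence of the series, which is an equivalent justification.
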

\begin{proof}
	By Corollary \ref{cor:stocrap} we know that $p_\nu(t,x)$ is solution of \eqref{eq:forprob}. Moreover, since $f/m \in \ell^2(\mm)$, we know that this solution is unique from Proposition \ref{prop:uniq}. Finally, from Theorem \ref{thm:strongfor}, we have that
	\begin{equation*}
	p_\nu(t,x)=m(x)\sum_{n \in \mathbb{N}_0}E_\nu(-bnt^\nu)f_nQ_n(x).
	\end{equation*}
	In particular we have
	\begin{equation*}
	p_\nu(t,x)=m(x)f_0Q_0(x)+m(x)\sum_{n \in \mathbb{N}}E_\nu(-bnt^\nu)f_nQ_n(x).
	\end{equation*}
	But $Q_0(x)=1$ and $f_0=\sum_{x \in \mathbb{N}_0}f(x)=1$ since $f$ is a probability mass. Thus we have
	\begin{equation*}
	p_\nu(t,x)=m(x)+m(x)\sum_{n \in \mathbb{N}}E_\nu(-bnt^\nu)f_nQ_n(x).
	\end{equation*}
	Now, by Cauchy-Schwartz inequality, the fact that $E_\nu(-bnt^\nu)\le 1$ and the duality formula for $C_n(x,\alpha)$, we obtain
	\begin{align*}
	\begin{split}
	\sum_{n \in \mathbb{N}}|E_\nu(-bnt^\nu)f_nQ_n(x)|&\le\sum_{n \in \mathbb{N}}|f_nQ_n(x)|\\&\le \Norm{f/m}{\ell^2(\mm)}\left(\sum_{n \in \mathbb{N}}\frac{\alpha^n}{n!}C_n^2(x,\alpha)\right)^{\frac{1}{2}}\\&\le e^{\frac{\alpha}{2}}\Norm{f/m}{\ell^2(\mm)}d_x^2
	\end{split}
	\end{align*}
	hence the second series totally converges. Thus we can take the limit inside the series and, since $\lim_{t \to +\infty}E_\nu(-bnt^\nu)=0$, we have
	\begin{equation*}
	\lim_{t \to +\infty}p_\nu(t,x)=m(x)+m(x)\sum_{n \in \mathbb{N}}\lim_{t \to +\infty}E_\nu(-bnt^\nu)f_nQ_n(x)=m(x).
	\end{equation*}
\end{proof}
From Theorem \ref{thm:statdist} we know that whatever is the distribution of $N_\nu(0)$, the limit distribution of $N_\nu(t)$ is always $\mm$. Moreover, we can show that $\mm$ is an invariant one-dimensional distribution for $N_\nu(t)$, that is to say that if $N_\nu(0)$ has distribution $\mm$, then $N_\nu(t)$ admits $\mm$ as distribution for any $t>0$.
\begin{prop}
	Suppose $N_\nu(0)$ has distribution $\mm$. Then for any $t>0$, $N_\nu(t)$ has distribution $\mm$.
\end{prop}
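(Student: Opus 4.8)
The plan is to apply the stochastic representation of Corollary \ref{cor:stocrap} (equivalently Theorem \ref{thm:strongfor}) to the particular initial datum $f=m$, exploiting that $m/m\equiv 1=Q_0$, so that the spectral expansion collapses to its zero-th mode. First I would check that $f=m$ meets the hypotheses: $f/m\equiv 1$ belongs to $\ell^2(\mm)$ since $\Norm{1}{\ell^2(\mm)}^2=\sum_{x=0}^{+\infty}m(x)=1<+\infty$, and, recalling that $Q_0\equiv 1$, its expansion $f(x)/m(x)=\sum_{n=0}^{+\infty}f_nQ_n(x)$ has coefficients $f_0=\langle 1,Q_0\rangle_{\ell^2(\mm)}=1$ and $f_n=\langle Q_0,Q_n\rangle_{\ell^2(\mm)}=0$ for $n\ge 1$, by orthonormality of the $Q_n$.

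Next, by Theorem \ref{thm:strongfor} the strong solution of \eqref{eq:forprob} with this initial datum is $u(t,x)=m(x)\sum_{n=0}^{+\infty}E_\nu(-bnt^\nu)Q_n(x)f_n=m(x)E_\nu(0)Q_0(x)=m(x)$, using $E_\nu(0)=1$. On the other hand, by Corollary \ref{cor:stocrap} (based on Theorem \ref{thm:pnu}) this same solution admits the representation $u(t,x)=\sum_{y\in\mathbb{N}_0}p_\nu(t,x;y)m(y)$. Finally I would close the argument by identifying the right-hand side with the law of $N_\nu(t)$: since $N_\nu(0)=N_1(L_\nu(0))=N_1(0)$ has law $\mm$ and, by Theorem \ref{thm:pnu}, $\bP(N_\nu(t)=x\,|\,N_\nu(0)=y)=p_\nu(t,x;y)$, the law of total probability gives $\bP(N_\nu(t)=x)=\sum_{y\in\mathbb{N}_0}m(y)p_\nu(t,x;y)=u(t,x)=m(x)$ for every $x\in\mathbb{N}_0$, which is precisely the claim.

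There is no genuine obstacle here: the whole content is the observation that the choice $f=m$ reduces the series to a single nonvanishing term, after which everything is a direct invocation of already established results (the spectral form of Theorem \ref{thm:strongfor}, the transition-mass identification of Theorem \ref{thm:pnu}, and the uniqueness/boundedness already proved). The only routine point is the countable additivity step $\bP(N_\nu(t)=x)=\sum_{y}\bP(N_\nu(0)=y)\,\bP(N_\nu(t)=x\,|\,N_\nu(0)=y)$, which is immediate since all terms are nonnegative. As an alternative, one could argue purely probabilistically: $\mm$ is the stationary distribution of the classical immigration-death process $N_1$, so $N_1(s)\sim\mm$ for every $s\ge 0$ whenever $N_1(0)\sim\mm$; conditioning on the independent time change $L_\nu(t)$ and integrating against its density $f_t$ then yields that $N_\nu(t)=N_1(L_\nu(t))\sim\mm$ for every $t>0$.
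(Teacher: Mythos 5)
Your proof is correct and follows essentially the same route as the paper's: both arguments reduce to the orthogonality relation $\sum_{y}Q_n(y)m(y)=\delta_{n,0}$, so that only the $n=0$ term of the spectral expansion survives and $p_\nu(t,\cdot)=m$. Your packaging via Theorem \ref{thm:strongfor} with $f=m$ (and the probabilistic aside on conditioning over $L_\nu(t)$) is a cosmetic variation, not a different method.
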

\begin{proof}
	Let us observe that the density of $N_\nu(t)$ is given by
	\begin{equation*}
	p_\nu(t,x)=\sum_{y \ge 0}p_\nu(t,x;y)m(y).
	\end{equation*}
	From Theorem \ref{thm:pnu} we have that
	\begin{equation*}
	p_\nu(t,x;y)=m(x)\sum_{n=0}^{+\infty}E_\nu(-bnt^\nu)Q_n(x)Q_n(y)
	\end{equation*}
	then we have
	\begin{align*}
	p_\nu(t,x)&=\sum_{y \ge 0}\left(m(x)\sum_{n=0}^{+\infty}E_\nu(-bnt^\nu)Q_n(x)Q_n(y)\right)m(y)\\&=m(x)\sum_{n=0}^{+\infty}E_\nu(-bnt^\nu)Q_n(x)\left(\sum_{y=0}^{+\infty}Q_n(y)m(y)\right).
	\end{align*}
	Recalling that $Q_0(y)\equiv 1$ we have that
	\begin{equation*}
	\sum_{y=0}^{+\infty}Q_n(y)m(y)=\sum_{y=0}^{+\infty}Q_0(y)Q_n(y)m(y)=\delta_{n,0}
	\end{equation*}
	hence
	\begin{equation*}
	p_\nu(t,x)=m(x)\sum_{n=0}^{+\infty}E_\nu(-bnt^\nu)Q_n(x)\delta_{n,0}=m(x).
	\end{equation*}
\end{proof}
However, since $N_\nu(t)$ is not Markovian, this Proposition does not guarantee the stationarity of the process when $N_\nu(0)$ admits $\mm$ as distribution. However, it is still possible to compute the autocovariance function of the process $N_\nu(t)$.
\begin{prop}
	Suppose $N_\nu(t)$ admits $\mm$ as initial distribution. Then
	\begin{equation}\label{eq:cov}
	\Cov(N_\nu(t),N_\nu(s))=\alpha\left(E_\nu(-bt^\nu)+\frac{b\nu t^\nu}{\Gamma(1+\nu)}\int_{0}^{\frac{s}{t}}\frac{E_\nu(-bt^\nu(1-z)^{\nu})}{z^{1-\nu}}dz\right).
	\end{equation}
\end{prop}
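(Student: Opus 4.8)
The plan is to reduce the computation to the Laplace transform of the increment $L_\nu(t)-L_\nu(s)$ of the inverse stable subordinator, and then to invoke the explicit formula for that quantity. Since the covariance is symmetric I assume without loss of generality that $0<s\le t$ (this is also the range in which the stated formula is meaningful, the upper limit being $s/t\le 1$). By the preceding Proposition the one-dimensional law of $N_\nu(\cdot)$ is the stationary law $\mm$, i.e.\ the Poisson law of parameter $\alpha$; hence $\E[N_\nu(t)]=\E[N_\nu(s)]=\alpha$ and $\Cov(N_\nu(t),N_\nu(s))=\E[N_\nu(t)N_\nu(s)]-\alpha^{2}$. Writing $N_\nu(\cdot)=N_1(L_\nu(\cdot))$ with $N_1$ independent of $L_\nu$, $N_1(0)=N_\nu(0)\sim\mm$ (so that $N_1(u)\sim\mm$ for every $u$, $\mm$ being stationary for $N_1$ as well), and $L_\nu$ nondecreasing, I would condition on the whole path of $L_\nu$: conditionally, $(N_1(L_\nu(s)),N_1(L_\nu(t)))$ has the law of $(N_1(w),N_1(v))$ at the deterministic pair $w=L_\nu(s)\le v=L_\nu(t)$.

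Using the Markov property of $N_1$, the conditional mean \eqref{eq:condmean}, and the Poisson moments $\E[N_1(w)]=\alpha$, $\E[N_1(w)^2]=\alpha+\alpha^2$, one obtains for $w\le v$
\[
\E[N_1(w)N_1(v)]=e^{-b(v-w)}\E[N_1(w)^{2}]+\alpha\bigl(1-e^{-b(v-w)}\bigr)\E[N_1(w)]=\alpha^{2}+\alpha\,e^{-b(v-w)} .
\]
Taking expectations over $L_\nu$ gives $\E[N_\nu(t)N_\nu(s)]=\alpha^{2}+\alpha\,\E\bigl[e^{-b(L_\nu(t)-L_\nu(s))}\bigr]$, hence
\[
\Cov(N_\nu(t),N_\nu(s))=\alpha\,\E\bigl[e^{-b(L_\nu(t)-L_\nu(s))}\bigr].
\]

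It then remains to show, for $0<s\le t$,
\[
\E\bigl[e^{-b(L_\nu(t)-L_\nu(s))}\bigr]=E_\nu(-bt^{\nu})+\frac{b\nu t^{\nu}}{\Gamma(1+\nu)}\int_{0}^{s/t}\frac{E_\nu(-bt^{\nu}(1-z)^{\nu})}{z^{1-\nu}}\,dz ,
\]
which is the crux and the main obstacle; it is the discrete-setting analogue of the correlation identity for inverse-stable time changes established for fractional diffusions (see \cite{Leonenko2013a,Leonenko2013b}). I would prove it via the strong Markov property of the $\nu$-stable subordinator $\sigma_\nu$ at the first-passage time $L_\nu(s)$: this gives $L_\nu(t)-L_\nu(s)=\widetilde L_\nu\bigl((t-\sigma_\nu(L_\nu(s)))^{+}\bigr)$ for an independent copy $\widetilde L_\nu$ of $L_\nu$, so that, conditioning on the overshoot $\sigma_\nu(L_\nu(s))$ and using $\E[e^{-b\widetilde L_\nu(r)}]=E_\nu(-br^{\nu})$,
\[
\E\bigl[e^{-b(L_\nu(t)-L_\nu(s))}\bigr]=\bP\bigl(\sigma_\nu(L_\nu(s))\ge t\bigr)+\int_{s}^{t}E_\nu(-b(t-v)^{\nu})\,\bP\bigl(\sigma_\nu(L_\nu(s))\in dv\bigr).
\]
The law of the first-passage pair $(\sigma_\nu(L_\nu(s)-),\sigma_\nu(L_\nu(s)))$ is explicit through the potential density $w^{\nu-1}/\Gamma(\nu)$ and the Lévy density $\nu v^{-1-\nu}/\Gamma(1-\nu)$ of $\sigma_\nu$, and the self-similarity $L_\nu(ct)\overset{d}{=}c^{\nu}L_\nu(t)$ lets one rescale by $t$; a change of variable normalising the integration interval to $[0,s/t]$ then produces the prefactor $b\nu t^{\nu}/\Gamma(1+\nu)$, the weight $z^{\nu-1}$ and the Mittag-Leffler kernel. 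Two sanity checks confirm the form: for $s=0$ both sides reduce to $E_\nu(-bt^{\nu})$, and for $s=t$ both sides equal $1$, the latter being equivalent to $1-E_\nu(-bt^{\nu})=\tfrac{b}{\Gamma(\nu)}\int_{0}^{t}(t-\tau)^{\nu-1}E_\nu(-b\tau^{\nu})\,d\tau$. Substituting the increment formula into $\Cov(N_\nu(t),N_\nu(s))=\alpha\,\E[e^{-b(L_\nu(t)-L_\nu(s))}]$ yields \eqref{eq:cov}. I expect this renewal-theoretic evaluation of the increment Laplace transform — in particular tracking the constants and the domain through the rescaling — to be the only nontrivial point, the rest being routine.
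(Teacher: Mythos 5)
Your first half is correct and is precisely the content of the paper's (omitted) proof: the paper merely remarks that the argument is identical to that of \cite{Leonenko2013b} once one observes that $N_1$ is stationary under $\mm$ and that $\Cov(N_1(t),N_1(0))=\alpha e^{-bt}$. Your computation $\E[N_1(w)N_1(v)]=\alpha^2+\alpha e^{-b(v-w)}$ from \eqref{eq:condmean}, followed by conditioning on the independent, nondecreasing path of $L_\nu$ to get $\Cov(N_\nu(t),N_\nu(s))=\alpha\,\E[e^{-b(L_\nu(t)-L_\nu(s))}]$, supplies exactly those observations and is complete.

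The gap is in the second half, which is the actual content of the result in \cite{Leonenko2013b} being invoked: the identity
\begin{equation*}
\E\bigl[e^{-b(L_\nu(t)-L_\nu(s))}\bigr]=E_\nu(-bt^{\nu})+\frac{b\nu t^{\nu}}{\Gamma(1+\nu)}\int_{0}^{s/t}\frac{E_\nu(-bt^{\nu}(1-z)^{\nu})}{z^{1-\nu}}\,dz
\end{equation*}
is only sketched, and the sketch as written does not close. Undoing the normalisation $z=v/t$ on the right-hand side gives
\begin{equation*}
E_\nu(-bt^{\nu})+\frac{b}{\Gamma(\nu)}\int_{0}^{s}v^{\nu-1}E_\nu(-b(t-v)^{\nu})\,dv,
\end{equation*}
an integral over $(0,s)$ weighted by the potential density $v^{\nu-1}/\Gamma(\nu)$ of $\sigma_\nu$, whereas your overshoot decomposition produces $\bP(\sigma_\nu(L_\nu(s))\ge t)+\int_{s}^{t}E_\nu(-b(t-v)^{\nu})\,\bP(\sigma_\nu(L_\nu(s))\in dv)$, an integral over $(s,t)$ against the overshoot law. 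These two expressions are indeed equal, but passing from one to the other is not the advertised ``change of variable normalising the interval to $[0,s/t]$'': it requires inserting the joint undershoot--overshoot density $\tfrac{\nu}{\Gamma(\nu)\Gamma(1-\nu)}w^{\nu-1}(v-w)^{-1-\nu}$, applying Fubini to integrate out the overshoot $v$ for fixed $w\in(0,s)$, and then collapsing the inner integral together with the tail term $\bP(\sigma_\nu(L_\nu(s))\ge t)$ via a fractional-integral identity for $E_\nu$ (essentially the one underlying your $s=t$ sanity check). None of this is carried out, and it is the only genuinely nontrivial step of the proof. A more direct route to the same identity is to write $e^{bL_\nu(s)}-1=b\int_0^{s}e^{bL_\nu(u)}\,dL_\nu(u)$ (valid since $L_\nu$ is continuous and nondecreasing), so that $\E[e^{-b(L_\nu(t)-L_\nu(s))}]=E_\nu(-bt^\nu)+b\,\E\bigl[\int_0^s e^{-b(L_\nu(t)-L_\nu(u))}\,dL_\nu(u)\bigr]$, and then use the renewal property of the range of $\sigma_\nu$ to get $\E\bigl[e^{-b(L_\nu(t)-L_\nu(u))}\,dL_\nu(u)\bigr]=E_\nu(-b(t-u)^{\nu})\,\tfrac{u^{\nu-1}}{\Gamma(\nu)}\,du$; this lands directly on the $(0,s)$ form above.
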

We omit the proof of this Proposition since it is identical to the one in \cite{Leonenko2013b}, after observing that if $N_1(t)$ admits $\mm$ as initial distribution, then $N_1(t)$ is stationary and, from \eqref{eq:condmean},
\begin{equation*}
\Cov(N_1(t),N_1(0))=\alpha e^{-bt}.
\end{equation*}
Remark $3.2$ and $3.3$ of \cite{Leonenko2013b} easily apply also to our process $N_\nu(t)$. Indeed, since in this case $N_\nu(t)$ is distributed as $N_\nu(0)$, then the variance $\D[N_\nu(t)]=\D[N_\nu(0)]=\alpha$, which can be obtained from \eqref{eq:cov} when $t=s$ with the same calculations as in \cite[Remark $3.2$]{Leonenko2013b}. Moreover, $N_1(t)$ exhibits long-range dependence, while, with the same calculations of \cite[Remark $3.3$]{Leonenko2013b}, one can show that $\Cov(N_\nu(t),N_\nu(s))$ decays as a power of $t$, hence it exhibits short-range dependence.
\appendix

\end{document}